\newcommand{\iso}{\cong}
\newtheorem{proposition}{Proposition}[section]
\newtheorem{lemma}[proposition]{Lemma}
\newtheorem{theorem}[proposition]{Theorem}
\newtheorem{corollary}[proposition]{Corollary}
\newcommand{\st}{\colon\,}
\newcommand{\sizeof}[1]{\left\lvert{#1}\right\rvert}
\newcommand{\floor}[1]{\left\lfloor{#1}\right\rfloor}
\title[Upper bounds for inverse domination in graphs]{Upper bounds for inverse domination in graphs}
\author{Elliot Krop \and Jessica McDonald \and Gregory J.~Puleo}
\thanks{The second author is supported in part by NSF grant DMS-1600551}
\address[Jessica McDonald, Gregory J. Puleo]{Department of Mathematics and Statistics, Auburn University, Auburn, Alabama, USA 36849}
\email[Jessica McDonald]{mcdonald@auburn.edu}
\email[Gregory J. Puleo]{gjp0007@auburn.edu}
\address[Elliot Krop] {Department of Mathematics, Clayton State University,
Morrow, Georgia, USA 30260}
\email[Elliot Krop]{elliotkrop@clayton.edu}
\begin{document}

\begin{abstract}  In any graph $G$, the domination number $\gamma(G)$ is at most the independence number $\alpha(G)$. The \emph{Inverse Domination Conjecture} says that, in any isolate-free $G$, there exists pair of vertex-disjoint dominating sets $D, D'$ with $|D|=\gamma(G)$ and $|D'| \leq \alpha(G)$. Here we prove that this statement is true if the upper bound $\alpha(G)$ is replaced by $\frac{3}{2}\alpha(G) - 1$ (and $G$ is not a clique). We also prove that the conjecture holds whenever $\gamma(G)\leq 5$ or $|V(G)|\leq 16$.
\end{abstract}

\maketitle

\section{Introduction}

In this paper all graphs are simple. A \emph{dominating set} for a
graph $G$ is a set of vertices $D$ such that every vertex of $G$
either lies in $D$ or has a neighbor in $D$.  The \emph{domination
  number} of $G$, written $\gamma(G)$, is the size of a smallest
dominating set in $G$. Note that a maximum independent set is a
dominating set, so $\gamma(G) \leq \alpha(G)$, where $\alpha(G)$ is
the independence number of $G$.

If a graph $G$ has no isolates and $D$ is a minimum dominating set in
$G$, then $V(G)-D$ is also a dominating set in $G$ (owing to the
minimality of $D$); this was first observed by Ore~\cite{ore}. In
general we say that a dominating set $D'$ is an \emph{inverse
  dominating set} for a graph $G$ if there is some minimum dominating
set $D$ such that $D\cap D'=\emptyset$. A graph with isolates cannot
have an inverse dominating set, but otherwise, given Ore's
observation, we can define the \emph{inverse domination number} of a
graph $G$, written $\gamma^{-1}(G)$, as the smallest size of an
inverse dominating set in $G$. The \emph{Inverse Domination
  Conjecture} asserts that $\gamma^{-1}(G)\leq \alpha(G)$ for every
isolate-free $G$.

The Inverse Domination Conjecture originated with Kulli and
Sigarkanti~\cite{kulli}, who in fact provided an erroneous
proof. Discussion of this error and further consideration of the
conjecture first appeared in a paper of Domke, Dunbar, and
Markus~\cite{domke}. It has since been shown by Driscoll and
Krop~\cite{dk} that the weaker bound of
$\gamma^{-1}(G) \leq 2\alpha(G)$ holds in general, and Johnson, Prier
and Walsh~\cite{JPW} showed that the conjecture itself holds
whenever $\gamma(G)\leq 4$. Johnson and Walsh \cite{JW} have also
proved two fractional analogs of the conjecture, and Frendrup,
Henning, Randerath and Vestergaard \cite{FHRV} have shown that the
conjecture holds for a number of special families, including bipartite
graphs and claw-free graphs.

In this paper we prove two main results in support of the Inverse
Domination Conjecture. The first is an improvement on the $2\alpha(G)$
approximation to the conjecture.

\begin{theorem}\label{thm:three-halves}
  If $G$ is a graph with no isolated vertices and $G$ is not a clique,
  then $\gamma^{-1}(G) \leq \frac{3}{2}\alpha(G) - 1$.
\end{theorem}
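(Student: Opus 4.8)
The plan is to fix a minimum dominating set $D$ and build an inverse dominating set $D'$ by starting from a maximum independent set and correcting the places where it meets $D$. First I would record the basic fact that in a minimum dominating set every vertex has a neighbor outside $D$: if some $a\in D$ had all of its neighbors in $D$, then $D\setminus\{a\}$ would still dominate (it dominates $a$ through a neighbor lying in $D$, and no vertex outside $D$ could have relied on $a$ alone), contradicting minimality. Next, let $I$ be a maximum independent set; then $I$ is itself a dominating set of size $\alpha(G)$. If $I$ can be chosen disjoint from $D$, then $D'=I$ works, and because $G$ is not a clique (so $\alpha(G)\ge 2$) we get $|D'| = \alpha(G)\le \tfrac32\alpha(G)-1$ and are done. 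So I may assume every maximum independent set meets $D$.

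Now choose $I$ to minimize $|I\cap D|$ and write $A = I\cap D$ and $B = I\setminus D$, so that $B$ is independent, disjoint from $D$, and $|B| = \alpha(G) - |A|$. Using the maximality of $I$, the only vertices \emph{not} dominated by $B$ are $A$ together with the set $W$ of vertices outside $I$ whose neighbors in $I$ all lie in $A$. The extremal choice of $I$ now buys a crucial structural fact: by a swap argument, every $w\in W\setminus D$ has at least two neighbors in $A$, since if $w$ had a unique $I$-neighbor $a\in A$, then $(I\setminus\{a\})\cup\{w\}$ would be a maximum independent set meeting $D$ in fewer vertices. I would then set $D' = B\cup C$, where $C\subseteq V(G)\setminus D$ is chosen to dominate $A\cup W$; since $|B| = \alpha(G) - |A|$, it suffices to produce such a $C$ with $|C|\le \tfrac12\alpha(G) + |A| - 1$.

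The heart of the argument, and the step I expect to be the main obstacle, is dominating $A\cup W$ economically from outside $D$. The leverage is exactly the two-neighbors property: a single vertex $w\in W\setminus D$, placed into $C$, dominates itself together with at least two members of $A$ at once, so covering $A$ through such centers pays at roughly half rate, while the external-neighbor structure of $D$ supplies individual dominators for the leftover vertices of $A$ and for $W\cap D$. It is this $2$-for-$1$ saving that should turn the trivial estimate $|C|\le |A|+|W|$ into the target $|C|\le \tfrac12\alpha(G)+|A|-1$, and hence produce the coefficient $\tfrac32$. Making this precise---selecting a minimal family of such centers, handling the vertices of $A$ that have no usable neighbor in $W\setminus D$, controlling overlaps, and absorbing the additive constant using $\alpha(G)\ge 2$---is where the real combinatorial work lies, and it is the part I would expect to occupy most of the proof.
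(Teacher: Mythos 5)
Your proposal stalls exactly at the step you flag as ``where the real combinatorial work lies,'' and the gap cannot be closed as stated: the key claim---that for a maximum independent set $I$ minimizing $|I\cap D|$ there is a set $C\subseteq V(G)\setminus D$ dominating $A\cup W$ with $|C|\le\tfrac12\alpha(G)+|A|-1$---is false. The obstruction is $W\cap D$, which your two-for-one leverage does not touch: the swap argument applies only to $W\setminus D$, while each vertex of $W\cap D$ may demand its own private dominator outside $D$, and meanwhile you have already committed to all $\alpha(G)-|A|$ vertices of $B$. Concretely, let $V(G)=\{a,x,q,w_1,w_2,w_3,p_1,p_2,p_3,b_1,\dots,b_5\}$, where $\{w_1,w_2,w_3\}$ induces a triangle, $a$ is adjacent to $w_1,w_2,w_3,q$, each $p_i$ is adjacent to $w_i$ and $b_i$, $x$ is adjacent to $b_1,\dots,b_5$, and $q$ is adjacent to $b_5$. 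The closed neighborhoods $N[p_1],N[p_2],N[p_3],N[q],N[b_4]$ are pairwise disjoint, so $\gamma(G)=5$ and $D=\{a,x,w_1,w_2,w_3\}$ is a minimum dominating set; the six cliques $\{a,w_1,w_2,w_3\}$, $\{q,b_5\}$, $\{x,b_4\}$, $\{p_i,b_i\}$ ($i=1,2,3$) cover $V(G)$, so $\alpha(G)=6$; and every maximum independent set contains exactly one vertex of $D$ (one vertex of the clique $\{a,w_1,w_2,w_3\}$), so $I=\{a,b_1,\dots,b_5\}$ is a legitimate minimizer of $|I\cap D|$. For this $I$ you get $A=\{a\}$, $B=\{b_1,\dots,b_5\}$, and $W=\{w_1,w_2,w_3\}\subseteq D$; in particular $W\setminus D=\emptyset$, so the two-neighbor property is vacuous. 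Since the unique neighbor of $w_i$ outside $D$ is $p_i$ and the unique neighbor of $a$ outside $D$ is $q$, any admissible $C$ must contain $\{p_1,p_2,p_3,q\}$, giving $|C|\ge 4>3=\tfrac12\alpha(G)+|A|-1$ and $|B\cup C|\ge 9>8=\tfrac32\alpha(G)-1$. The theorem does hold here---$\{p_1,p_2,p_3,q,b_4\}$ is a dominating set of size $5$ disjoint from $D$---but it is not obtained from any set containing $B$, so what fails is the strategy of forcing $B\subseteq D'$, not merely the bookkeeping.

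This is also where your route departs irreparably from the paper's. The paper never starts from a maximum independent set of $G$: it builds the independent part of the inverse dominating set from scratch inside $G-D$, and its factor $\tfrac12$ comes from a genuinely nonelementary tool, Haxell's theorem (Theorem~\ref{thm:haxell}), applied to two disjoint copies of $G-D-N(F)$ for a maximal independent set $F$ of $G[D]$ (Theorem~\ref{thm:twoisrs}). This yields a partial ISR meeting at least half the parts of the standard partition (Corollary~\ref{cor:ISRhalf}); extending it to a maximal independent set $S$ of $G-D$, the set $S$ together with the part $F'$ of $F$ it misses is independent, so $|S|+|F'|\le\alpha(G)$, and only the at most $n/2\le(\gamma(G)-1)/2$ vertices of $D-F$ missed by the ISR need private outside neighbors (Theorem~\ref{thm:main}). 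Crucially, the half-saving there is measured against $\gamma(G)-1\le\alpha(G)-1$ (legitimate because otherwise some minimum dominating set is independent and Lemma~\ref{lem:inddom} finishes), not against $|A|$ or $|W|$. Elementary swap arguments of the kind you propose do give something: for instance, taking a maximal independent set $J$ of $G[W\setminus D]$, independence of $B\cup(A\setminus N(J))\cup J$ forces $|J|\le|A\cap N(J)|$, and one recovers a bound of roughly $\alpha(G)+\gamma(G)-1\le 2\alpha(G)-1$, essentially the Driscoll--Krop bound. But to push $2\alpha$ down to $\tfrac32\alpha$ you need either a carefully chosen $D$ or ISR machinery; the two-for-one property of $W\setminus D$ alone cannot deliver it.
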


Note that if $G$ is a clique and $G \neq K_1$, then trivially
$\gamma^{-1}(G) = \alpha(G) = 1$, which is why we must exclude cliques
in Theorem \ref{thm:three-halves}.

Our second main result improves the range of $\gamma(G)$ for which the conjecture is known.

\begin{theorem}\label{thm:dom5}
  If $G$ is a graph with no isolated vertices and $\gamma(G) \leq 5$, then $\gamma^{-1}(G) \leq \alpha(G)$.
\end{theorem}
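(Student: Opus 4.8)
The plan is to build an inverse dominating set inside the complement of a fixed minimum dominating set, and to charge its size against a carefully chosen independent set. First I would invoke Johnson, Prier and Walsh~\cite{JPW} to reduce to the case $\gamma(G)=5$, since they already settled $\gamma(G)\le 4$. Fix a minimum dominating set $D=\{v_1,\dots,v_5\}$ and write $H=G[V(G)\setminus D]$. Ore's observation shows that $V(G)\setminus D$ dominates $G$, and in particular every $v_i$ has a neighbor outside $D$; let $Z$ be the set of vertices of $H$ whose only neighbors lie in $D$ (the isolated vertices of $H$). The set $Z$ is independent, and any dominating set disjoint from $D$ must contain $Z$. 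The main device is a maximal independent set $S$ of $H$: it is disjoint from $D$, it is independent (so $\sizeof{S}\le\alpha(G)$), it contains $Z$, and being maximal in $H$ it dominates all of $V(G)\setminus D$.

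If $S$ happens to dominate $D$ as well, then $S$ itself is an inverse dominating set of size at most $\alpha(G)$ and we are done. Otherwise let $U\subseteq D$ be the set of vertices of $D$ with no neighbor in $S$, i.e.\ exactly the vertices of $D$ not dominated by $S$, each of which still has an external neighbor in $V(G)\setminus D$. I would then form $D'=S\cup T$, where $T\subseteq V(G)\setminus D$ is chosen to dominate $U$; this is a valid inverse dominating set (with respect to $D$), and the whole problem becomes bounding $\sizeof{S}+\sizeof{T}$ by $\alpha(G)$. The crucial structural observation is that no vertex of $U$ has a neighbor in $S$, so for any independent subset $U_0\subseteq U$ the set $S\cup U_0$ is independent; hence $\alpha(G)\ge \sizeof{S}+\alpha(G[U])$. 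In particular, when $U$ is itself independent we may take one external neighbor per vertex of $U$, so that $\sizeof{T}\le\sizeof{U}=\alpha(G[U])$ and $\sizeof{D'}\le \sizeof{S}+\alpha(G[U])\le\alpha(G)$.

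The remaining, and genuinely harder, case is when $G[U]$ contains edges, and I expect this to be the main obstacle. Here the naive bound $\sizeof{T}\le\sizeof{U}$ can exceed the available budget $\alpha(G)-\sizeof{S}$, since $\alpha(G[U])<\sizeof{U}$ while we might still be forced to spend one external vertex on each member of $U$ (no single external vertex need dominate two adjacent members of $U$). To handle it I would exploit two sources of slack. First, a single external vertex may dominate several vertices of $U$ at once, so $T$ should be taken as a minimal set of external vertices dominating $U$, whence each chosen vertex has a private target in $U$. Second, and more importantly, one can try to enlarge the independent set beyond $S\cup U_0$: the external private neighbors of the vertices of $D$, together with the freedom to re-select the minimum dominating set $D$, should let one exhibit an independent set of size $\sizeof{S}+\sizeof{T}$ precisely when the cheap covering fails. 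Since $\sizeof{U}\le 5$, there are only finitely many isomorphism types for $G[U]$, so the argument reduces to a finite case analysis: for each type one shows that either $U$ can be dominated by at most $\alpha(G)-\sizeof{S}$ external vertices, or the configuration forces an independent set large enough to absorb the extra cost. Verifying that every one of these small configurations stays within the independence budget is the technical heart of the proof.
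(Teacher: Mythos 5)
Your setup through the two easy cases is sound: fixing a minimum dominating set $D$, taking a maximal independent set $S$ in $G-D$, and observing that the undominated set $U \subseteq D$ has no edges to $S$ (so $\alpha(G) \geq \sizeof{S} + \alpha(G[U])$, and the case of independent $U$ follows by adding one external neighbor per vertex of $U$) --- this is essentially the paper's Lemma~\ref{lem:inddom}. But everything after that is a gap, and it is precisely where the entire content of the theorem lives. Your plan for the case where $G[U]$ has edges is a ``finite case analysis over the isomorphism types of $G[U]$,'' but the obstruction is not the structure of $G[U]$: it is the interaction between $U$, $S$, and the rest of the graph. If, say, $U$ is a triangle, the budget your argument guarantees beyond $\sizeof{S}$ is $\alpha(G[U]) = 1$, yet no single external vertex need dominate two vertices of $U$, so the construction may be forced to spend three vertices against a guaranteed budget of one. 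Nothing in your framework rules this out, and ruling it out cannot be done by inspecting $G[U]$ alone, since the question is what the external neighborhoods of $U$ look like and whether they can be absorbed into a large independent set --- a statement about an unbounded family of graphs, not a finite check.

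The paper's proof shows what leverage is actually needed, and it is substantial. First, $D$ is not arbitrary: it is chosen to maximize $\alpha(G[D])$ among minimum dominating sets and, subject to that, to minimize the number of edges of $G[D]$; exchange arguments against this optimality (swapping vertices of $D$ for private neighbors, which exist in pairs by Lemma~\ref{lem:twoprivate}) are what eliminate the bad configurations. Indeed, the paper explicitly remarks that, unlike the proof of Theorem~\ref{thm:three-halves}, its $\gamma(G)\leq 5$ argument does \emph{not} work for an arbitrary minimum dominating set, so your opening move of fixing any $D$ already forfeits a tool the known proof cannot do without. Second, the independent set in $G-D$ is not an arbitrary maximal one: it is grown from independent sets of representatives of a standard partition, obtained via Haxell's theorem (Theorem~\ref{thm:haxell}), which is what gives control over \emph{which} vertices of $D$ get dominated --- exactly the control your $S$ lacks, and the reason your $U$ can be arbitrarily bad. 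Your closing sentences (``the freedom to re-select $D$ \ldots should let one exhibit an independent set large enough'') name the right ingredients but prove nothing about them; as written, the proposal establishes only the cases already covered by Lemma~\ref{lem:inddom} and leaves the hard case open.
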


As a corollary of Theorem~\ref{thm:dom5} we are also able to obtain the following.

\begin{corollary}\label{cor:v16}
  If $G$ is a graph with no isolated vertices and $\sizeof{V(G)} \leq 16$, then $\gamma^{-1}(G) \leq \alpha(G)$.
\end{corollary}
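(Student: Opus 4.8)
The plan is to first reduce to the connected case. A minimum dominating set, a maximum independent set, and a minimum inverse dominating set of a disconnected graph are each obtained by taking the corresponding set in every component, so $\gamma$, $\alpha$, and $\gamma^{-1}$ are all additive over connected components; hence it suffices to prove $\gamma^{-1}(G)\le\alpha(G)$ when $G$ is connected. Next I would split on the value of $\gamma(G)$. If $\gamma(G)\le 5$ the statement is immediate from Theorem~\ref{thm:dom5}, so assume $\gamma(G)\ge 6$. By Ore's classical bound $\gamma(G)\le\sizeof{V(G)}/2\le 8$ we have $\gamma(G)\in\{6,7,8\}$, leaving only a narrow range of orders, namely $2\gamma(G)\le\sizeof{V(G)}\le 16$.

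The two inequalities I would lean on are the trivial inverse bound $\gamma^{-1}(G)\le\sizeof{V(G)}-\gamma(G)$ (the set $V(G)-D$ is dominating for any minimum dominating set $D$, as noted in the introduction) and Theorem~\ref{thm:three-halves}. Writing $n=\sizeof{V(G)}$ and $\tau$ for the vertex cover number, Gallai's identity $\alpha(G)+\tau=n$ together with $\gamma(G)\le\tau$ gives $\gamma(G)\le\alpha(G)\le n-\gamma(G)$; with $\gamma(G)\in\{6,7,8\}$ this confines $(\gamma(G),\alpha(G),n)$ to finitely many triples. Whenever $\alpha(G)=n-\gamma(G)$, equivalently $\gamma(G)=\tau$, the trivial bound already yields $\gamma^{-1}(G)\le n-\gamma(G)=\alpha(G)$. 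In particular this disposes of every Ore-extremal case $\gamma(G)=n/2$: for connected $G$ these are exactly the coronas $H\circ K_1$ (and $C_4$), for which $\alpha(G)=\gamma(G)=n/2$, so in the relevant subcase $\gamma(G)=8$, $n=16$ one has $\gamma^{-1}(G)\le 8=\alpha(G)$.

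The hard part will be the connected graphs with $\gamma(G)\in\{6,7\}$ and $\alpha(G)<n-\gamma(G)$, i.e.\ with $\gamma(G)<\tau$. Here neither tool is strong enough: the trivial bound overshoots $\alpha(G)$, and Theorem~\ref{thm:three-halves} only gives $\gamma^{-1}(G)\le\tfrac32\alpha(G)-1$, which for $\alpha(G)\ge 6$ is at least $8$. That these bounds really are too weak is visible already in $4K_3\cup 2K_2$, where $\gamma=\alpha=6$ but $n-\gamma=10$ and $\tfrac32\alpha-1=8$; passing to connected graphs removes such examples but not the underlying difficulty. My plan for this core range is to exploit the rigidity forced by a comparatively large domination number in a connected graph of small order: such a $G$ must carry many private neighbors, which I would use either to select a maximum independent set disjoint from some minimum dominating set---immediately producing an inverse dominating set of size $\alpha(G)$---or to build an inverse dominating set of size at most $\alpha(G)$ directly. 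Because $\gamma(G)\le 8$ and $n\le 16$ restrict us to a small, explicitly describable family, I expect this structural analysis (or, failing a clean uniform argument, a finite verification over that family) to close the remaining cases; this is the crux of the corollary.
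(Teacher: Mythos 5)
There is a genuine gap. Your reduction disposes of $\gamma(G)\le 5$ (via Theorem~\ref{thm:dom5}) and of the cases where $\alpha(G)=\sizeof{V(G)}-\gamma(G)$, but you yourself identify the remaining range --- connected $G$ with $\gamma(G)\in\{6,7\}$ and $\alpha(G)<\sizeof{V(G)}-\gamma(G)$ --- as ``the crux,'' and for it you offer only a hope: that many private neighbors will permit either a maximum independent set disjoint from some minimum dominating set or a direct construction, ``or, failing a clean uniform argument, a finite verification over that family.'' That is not an argument; the family of connected graphs on at most $16$ vertices with domination number $6$ or $7$ is far too large to check by hand, and no mechanism is given for the structural step. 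Moreover, your two chosen tools (the trivial bound $\gamma^{-1}(G)\le\sizeof{V(G)}-\gamma(G)$ and Theorem~\ref{thm:three-halves}) provably cannot close this range, as you note, so the entire weight of the proof rests on the unspecified part.

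The missing idea is the quantitative content of Lemma~\ref{lem:biglemma}. The paper argues by contradiction: if $\gamma^{-1}(G)>\alpha(G)$, take an \emph{optimal} dominating set $D$ with $a$ isolated vertices in $G[D]$. Lemma~\ref{lem:inddom} rules out any independent set $S$ with $S-D$ dominating $D-S$, so Lemma~\ref{lem:biglemma} applies and yields $\sizeof{V(G)}+a\ge 3\sizeof{D}$ and $\sizeof{D}\ge a+5$; the first of these comes exactly from the private-neighbor counting you gesture at (Lemma~\ref{lem:twoprivate}: each non-isolated vertex of $G[D]$ has at least two private neighbors, and the isolates $A$ of $G[D]$ satisfy $\sizeof{N(A)}\ge\sizeof{A}$). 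Combining with $\sizeof{D}=\gamma(G)\ge 6$ (Theorem~\ref{thm:gamma5} and Lemma~\ref{lem:leq}) gives $\sizeof{V(G)}\ge 18$ when $a=0$ and $\sizeof{V(G)}\ge 3(a+5)-a=2a+15\ge 17$ when $a\ge 1$, so every counterexample has at least $17$ vertices. Note that this needs no reduction to connected graphs, no case split on $\gamma\in\{6,7,8\}$, and no appeal to Theorem~\ref{thm:three-halves}; your instinct about private neighbors points in the right direction, but the proof requires carrying out that count against an optimal (not merely minimum) dominating set, which is precisely what you left undone.
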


It is worth noting that Asplund, Chaffee, and Hammer \cite{ach} have
formulated a stronger form of the Inverse Domination Conjecture. In
the strengthened version one requires, for \emph{every} minimum
dominating set $D$, the existence of a dominating set $D'$ with
$D\cap D=\emptyset$ and $|D'|\leq \alpha(G)$.  It is not hard to see
that our proof for Theorem \ref{thm:three-halves} also works for this
stronger conjecture. However, the same is not true for Theorem
\ref{thm:dom5}, where we pick our minimum dominating set $D$ very
carefully.

The rest of the paper is organized as follows. In Section \ref{sec:isrs} we introduce the notion of an \emph{independent set of representatives}, or ISR, and explore the connections between ISRs and inverse domination. (In this section, we also obtain, as a corollary, the inequality $\gamma^{-1}(G) \leq b(G)$ for graphs without isolated vertices, where $b(G)$ is the largest
number of vertices in an induced bipartite subgraph of $G$.)  In Section \ref{sec:three-halves} we prove Theorem \ref{thm:three-halves}. In Section \ref{sec:dom5} we leverage the machinery of Section \ref{sec:isrs} to prove Theorem \ref{thm:dom5} and Corollary \ref{cor:v16}. 

\section{ISRs and Inverse Domination}\label{sec:isrs}

If $(X_1, \ldots, X_k)$ is a collection of sets, a \emph{set of
  representatives} for $(X_1, \ldots, X_k)$ is a set
$\{x_1, \ldots, x_k\}$ such that $x_i \in X_i$ for each $i$.
If $G$ is a graph and $V_1, \ldots, V_k$ are subsets of $V(G)$, an
\emph{independent set of representatives}, or ISR, for
$(V_1, \ldots, V_k)$ is a set of representatives for the sets
$V_1, \ldots, V_k$ that is also an independent set in $G$.
A \emph{partial ISR} for $V_1, \ldots, V_k$ is an ISR
for any subfamily of $V_1, \ldots, V_k$.

Several authors have proved various sufficient conditions guaranteeing
the existence of ISRs; many of the proofs are topological in
nature. See \cite{ABZ} for a collection of such results.  A
fundamental result on ISRs is the following sufficient condition due
to Haxell~\cite{haxell}. In what follows, given a graph $G$ and a set
$A\subseteq V(G)$, $G[A]$ denotes the subgraph of $G$ induced by
$A$. Given a collection of sets $(V_1, \ldots, V_k)$ and
$J \subseteq [k]$, we write $V_J$ for the union
$\bigcup_{j \in J}V_j$.

\begin{theorem}[Haxell~\cite{haxell}]\label{thm:haxell}
  Let $G$ be a graph and let $V_1, \ldots, V_n$ be a partition of
  $V(G)$. If, for all $S \subseteq [n]$,
  \[\gamma (G[V_S]) \geq 2|S|-1,\]
  then $G$ has an independent set $v_1, \ldots, v_n$ such that $v_i\in V_i$ for each $i$ (that is, $(V_1, \ldots, V_n)$ has an ISR).
\end{theorem}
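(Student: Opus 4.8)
The statement is Haxell's sufficient condition for ISRs (Theorem~\ref{thm:haxell}), and the natural plan is to prove the contrapositive by a maximal-partial-ISR augmenting argument in the style of Haxell's original proof of the $2\Delta$ bound. Concretely, I would assume that $(V_1,\dots,V_n)$ has no ISR and aim to produce a single index set $S \subseteq [n]$ together with a dominating set of $G[V_S]$ of size at most $2|S|-2$, which directly contradicts the hypothesis that $\gamma(G[V_S]) \geq 2|S|-1$ for every $S$. Thus the entire content reduces to the combinatorial claim: \emph{no ISR implies some $G[V_S]$ has a dominating set of size at most $2|S|-2$}.

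To build such an $S$, I would begin with a partial ISR $R$ covering a collection of indices $I \subsetneq [n]$ that is maximal under inclusion (proper, since there is no full ISR), fix an uncovered index $j$, and attempt to extend $R$ into $V_j$. Every candidate representative of $V_j$ must be blocked, so for each I would record a vertex of $R$ adjacent to it (a \emph{blocker}). This sets up an alternating exchange process: swapping a blocker out of $R$ may free its index but block another, and I would iterate, accumulating the indices touched into $S$ while maintaining the invariant that the current representatives on $S$ together with the recorded blockers dominate all of $V_S$. Running this process to a terminal configuration, where the non-existence of an ISR guarantees no further augmentation is possible, yields the desired $S$.

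The bookkeeping is the crux and the main obstacle. I need to argue that at termination the accumulated structure dominates \emph{every} vertex of $V_S$ while using at most $2|S|-2$ vertices: intuitively each index of $S$ contributes its representative plus at most one blocker, giving $2|S|$, and a careful analysis of the root index $j$ (where extension failed) and of the terminal step produces a saving of two. Making this airtight, namely tracking exactly which vertices enter the dominating set, proving that the alternating process terminates, and verifying domination of all of $V_S$ rather than merely the blocked candidates, is where a case analysis or an auxiliary tree/forest structure organizing the exchanges will be needed.

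An alternative I would keep in reserve is the topological route. Here one combines the Aharoni--Haxell topological Hall theorem, which yields an ISR once each independence complex $\mathcal{I}(G[V_S])$ is $(|S|-2)$-connected, with a Meshulam-type lower bound relating the connectivity of an independence complex to domination, of the form $\eta(\mathcal{I}(H)) \geq \lceil \gamma(H)/2\rceil$. Feeding $\gamma(G[V_S]) \geq 2|S|-1$ through the latter gives $\eta(\mathcal{I}(G[V_S])) \geq \lceil (2|S|-1)/2\rceil = |S|$, exactly the connectivity the former demands, so the conclusion follows at once; the only ``obstacle'' in this route is that the topological connectivity bound must be imported wholesale rather than derived, which is why I would favor the self-contained augmenting argument as the primary approach.
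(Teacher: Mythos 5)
The paper does not prove Theorem~\ref{thm:haxell} at all: it is quoted directly from Haxell's work (with citation), so there is no internal proof to compare your attempt against. Judged on its own merits, your primary combinatorial route has the right skeleton --- Haxell's argument does proceed by assuming no ISR exists and producing an index set $S$ together with a dominating set of $G[V_S]$ of size at most $2|S|-2$ --- but your proposal stops exactly where the real content of the theorem begins. The steps you defer (``a careful analysis \ldots produces a saving of two,'' ``a case analysis or an auxiliary tree/forest structure \ldots will be needed'') are not bookkeeping; they are the proof. Concretely: (i) the invariant you state, that the current representatives together with the recorded blockers dominate all of $V_S$, is not preserved by naive swapping, since a vertex of $V_S$ whose only recorded blocker is the vertex you just swapped out of $R$ can become undominated; (ii) nothing in the sketch shows that the exchange process terminates --- one needs a potential function or a growing-tree structure in which each step strictly increases some bounded quantity; and (iii) the count of $2|S|-2$ rather than $2|S|$ requires identifying exactly which two ``slots'' are never spent (in Haxell's argument this comes from the root index and the terminal failed extension), and this is precisely where the delicate case analysis lives. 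Until these three points are nailed down, the contrapositive claim --- no ISR implies some $G[V_S]$ has a dominating set of size at most $2|S|-2$ --- remains unproven, so the primary route has a genuine gap.

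Your reserve topological route, by contrast, is a sound derivation: the Aharoni--Haxell topological Hall theorem combined with the Meshulam-type bound $\eta(\mathcal{I}(H)) \geq \gamma(H)/2$ gives $\eta(\mathcal{I}(G[V_S])) \geq \lceil (2|S|-1)/2 \rceil = |S|$ for every $S$, which is exactly the hypothesis needed to conclude that an ISR exists. But this proves the theorem only by reducing it to two imported results, each at least as deep as the statement itself; as a self-contained argument it has the same epistemic status as the paper's own decision to cite Haxell rather than reprove the result. If your aim is an actual proof, the combinatorial argument must be completed; as written, the proposal is a correct plan whose decisive steps are missing.
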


Our basic idea for using Theorem~\ref{thm:haxell} to obtain results on
inverse domination is to apply it to a specific partition of vertices
outside $D$ (where $D$ is a minimum dominating set), namely to what
we'll call a \emph{standard partition}.

Let $G$ be a graph and suppose that
$X, Y$ are disjoint sets of vertices where $X$ dominates $Y$. The standard partition of $Y$, subject to a given ordering $(v_1, \ldots, v_n)$ of $X$, is
the partition $(V_1, \ldots, V_n)$ with
  \[V_i = N_Y(v_i) \setminus \bigcup_{j < i}V_j,\]
  where $N_Y(v_i)$ indicates those neighbors of $v_i$ that are in $Y$.
  Consider a minimum dominating set $D$, and the standard partition of
  $V(G)-D$ with respect to any ordering of $D$. If this partition has
  an ISR, then the ISR is an independent set disjoint from $D$ that
  dominates $D$. Expanding this independent set to a maximal
  independent set in $G-D$ would give an independent dominating set
  disjoint from $D$, implying that $\gamma^{-1}(G) \leq
  \alpha(G)$. However, we cannot always find an ISR for a standard
  partition of $G-D$. Instead, we obtain more technical results.

In the following, given disjoint sets
$X_1, \ldots, X_k$ and $S \subset X_1 \cup \cdots \cup X_k$, we write $i(S)$
for the set $\{j \st S\cap X_j\neq \emptyset\}$. When $S=\{v\}$, we'll denote the unique element of $i(S)$ by $i(v)$.

\begin{theorem}\label{thm:twoisrs}
  Let $G$ be a graph, let $D$ be a minimum dominating set in $G$, and
  let $F$ be a maximal independent set in $D$. Let
  $(d_1, \ldots, d_n)$ be any ordering of $D-F$, and let
  $(V_1, \ldots, V_n)$ be the standard partition of $G-D-N(F)$ subject
  to this ordering.  Then there exist two partial ISRs $R_1, R_2$ of
  $(V_1, \ldots, V_n)$ such that $i(R_1) \cap i(R_2)=\emptyset$ and $i(R_1) \cup i(R_2) = [n]$.
\end{theorem}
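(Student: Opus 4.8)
The plan is to deduce this from Haxell's theorem (Theorem~\ref{thm:haxell}) applied not to $(V_1,\dots,V_n)$ itself, but to a ``doubled'' copy of it in an auxiliary graph, the crucial input being a domination lower bound squeezed out of the minimality of $D$. Before anything else I would record two structural facts. Because $F$ is a \emph{maximal} independent set in $G[D]$, every vertex of $D-F$ has a neighbor in $F$, so $D\subseteq N[F]$. And because $D$ dominates $G$, each vertex of $Y:=G-D-N(F)$ lies outside $D$ and has no neighbor in $F$, hence has a neighbor in $D-F$; thus $D-F$ dominates $Y$, and $(V_1,\dots,V_n)$ genuinely is a partition of $Y$ with $V_i\subseteq N(d_i)$ for each $i$.

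The heart of the argument is the bound
\[\gamma(G[V_S])\ge |S|\qquad\text{for every }S\subseteq[n].\]
To prove it, I would let $W$ be a minimum dominating set of $G[V_S]$ and show that $D^{\ast}:=F\cup\{d_j : j\in[n]\setminus S\}\cup W$ dominates $G$. Indeed, every vertex of $N[F]$ (which, by the first structural fact, contains all of $D$) is dominated by $F$; and a vertex of $Y$ lying in some $V_i$ is dominated by $d_i$ when $i\notin S$, and lies in $V_S$ and so is dominated by $W$ when $i\in S$. The three pieces of $D^{\ast}$ are pairwise disjoint, so $|D^{\ast}|=|F|+(n-|S|)+|W|$; since $|D|=|F|+n$, minimality of $D$ forces $|F|+n\le|F|+(n-|S|)+|W|$, that is, $|S|\le|W|=\gamma(G[V_S])$.

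Finally I would run the doubling step. Form a graph $H$ on vertex set $Y\times\{1,2\}$, declaring $(u,a)$ and $(v,b)$ adjacent exactly when $a=b$ and $uv\in E(G)$, and partition $V(H)$ into the parts $U_i:=V_i\times\{1,2\}$. For each $S\subseteq[n]$ the induced subgraph $H[U_S]$ is the disjoint union of two copies of $G[V_S]$, so $\gamma(H[U_S])=2\gamma(G[V_S])\ge 2|S|\ge 2|S|-1$, which is precisely the hypothesis of Haxell's theorem. The resulting ISR selects, for each $i$, a pair $(w_i,c_i)$ with $w_i\in V_i$ and $c_i\in\{1,2\}$; setting $R_1=\{w_i : c_i=1\}$ and $R_2=\{w_i : c_i=2\}$ produces two partial ISRs, since two same-color representatives are non-adjacent in $H$ and hence non-adjacent in $G$. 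By construction $i(R_1)=\{i : c_i=1\}$ and $i(R_2)=\{i : c_i=2\}$ are disjoint with union $[n]$, as required.

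I expect the main obstacle to be the domination bound $\gamma(G[V_S])\ge|S|$: one must guess the correct global dominating set $D^{\ast}$ and verify it dominates, using the maximality of $F$ (to absorb all of $D$ together with $N(F)$) and the minimality of $D$ (to obtain the inequality) in exactly the right way. Once this bound is secured, the factor-two gap between it and Haxell's requirement of $2|S|-1$ closes automatically, since passing to two disjoint copies doubles the domination number while only costing the additive $-1$ of slack.
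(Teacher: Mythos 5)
Your proposal is correct and follows essentially the same route as the paper: both double $G-D-N(F)$, apply Haxell's theorem to the doubled partition, and derive the needed domination hypothesis from the minimality of $D$ via exactly the same replacement set $D^{*}=F\cup\{d_j : j\in[n]\setminus S\}\cup W$ (the paper writes it as $(D\setminus\{d_i : i\in S\})\cup C'$). The only cosmetic difference is ordering: you prove $\gamma(G[V_S])\geq \sizeof{S}$ in the original graph and then double it, whereas the paper works directly in the doubled graph, splitting a hypothetical dominating set of size less than $2\sizeof{S}$ into two halves and substituting the smaller half into $D$.
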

\begin{proof}
  Let $H$ be a graph consisting of two disjoint copies of
  $G - D - N(F)$, and let $W_1, \ldots, W_n$ be a partition of $V(H)$
  obtained by letting each $W_i$ consist of both copies of each vertex
  in $V_i$.

  We will use Theorem~\ref{thm:haxell} to obtain an ISR of $(W_1, \ldots, W_n)$.
  Let $S$ be any subset of $[n]$, and let $H'=H[W_S]$. We will show that $\gamma(H') \geq 2\sizeof{S}$.

  Observe that $H'$ consists of two disjoint copies of the subgraph $G':=G[V_S]$, so that any dominating set
  in $H'$ must dominate each of those copies. If $\gamma(H') < 2\sizeof{S}$,
  then let $C$ be a minimum dominating set of $H'$. We can partition $C$
  into $C = C_1 \cup C_2$, where $C_1$ dominates one copy of $G'$ and $C_2$
  dominates the other copy. Without loss of generality $\sizeof{C_1} \leq \sizeof{C_2}$,
  and since $\sizeof{C} < 2\sizeof{S}$, this implies $\sizeof{C_1} < \sizeof{S}$.
  Let $C'$ be the set of vertices in $G'$ corresponding to the vertices of $C_1$, and let
  $D^* = (D \setminus \{d_i \st i \in S\}) \cup C'$. We know that $D^*$ dominates $V(G)-D$, and moreover since $F \subseteq D^*$ and $F$ dominates $D-F$, we see that  $D^*$ is a dominating set of $G$.
    Since $\sizeof{D^*} < \sizeof{D}$, this contradicts the minimality of $D$.

  Thus $(W_1, \ldots, W_n)$ has some ISR $R$. We can partition $R = R_1 \cup R_2$
  where $R_1$ consists of the $R$-vertices in one copy of $G'$ and $R_2$ consists
  of the $R$-vertices in the other copy of $G'$. Now $R_1$ and $R_2$ are each independent
  subsets of $G'$, and since $R$ is an ISR we see that $i(R_1) \cap i(R_2)=\emptyset$ and $i(R_1) \cup i(R_2) = [n]$.
\end{proof}

As an immediate and useful corollary to Theorem \ref{thm:twoisrs}, we get the following.

\begin{corollary}\label{cor:ISRhalf} Let $G$ be a graph, let $D$ be a minimum dominating set in $G$, and
  let $F$ be a maximal independent set in $D$. Let
  $(d_1, \ldots, d_n)$ be any ordering of $D-F$, and let
  $(V_1, \ldots, V_n)$ be the standard partition of $G-D-N(F)$ subject
  to this ordering.  Then $(V_1, \ldots, V_n)$ has a partial ISR of size at least $n/2$.
\end{corollary}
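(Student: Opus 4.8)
The plan is to read this off directly from Theorem~\ref{thm:twoisrs}, since that result already does all the real work. Applying the theorem to the same data (the dominating set $D$, the maximal independent set $F$, the chosen ordering of $D - F$, and the resulting standard partition $(V_1, \ldots, V_n)$ of $G - D - N(F)$) produces two partial ISRs $R_1$ and $R_2$ satisfying $i(R_1) \cap i(R_2) = \emptyset$ and $i(R_1) \cup i(R_2) = [n]$.

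The one observation I would make is that a partial ISR contains exactly one representative from each part that it covers, so $\sizeof{R_j} = \sizeof{i(R_j)}$ for $j \in \{1, 2\}$. Because $i(R_1)$ and $i(R_2)$ are disjoint and together exhaust $[n]$, summing yields $\sizeof{R_1} + \sizeof{R_2} = \sizeof{i(R_1)} + \sizeof{i(R_2)} = n$. A pigeonhole step then guarantees that at least one of $R_1, R_2$ has size at least $n/2$, and that one is the desired partial ISR of $(V_1, \ldots, V_n)$.

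I expect no genuine obstacle here: the corollary is a pure counting consequence of the theorem. The only point that needs to be checked is the identity $\sizeof{R_j} = \sizeof{i(R_j)}$, and this is immediate precisely because the $V_i$ form a partition (being a standard partition), so the parts are pairwise disjoint and no single representative can belong to two different parts. Everything nontrivial has already been absorbed into the proof of Theorem~\ref{thm:twoisrs} via Haxell's criterion and the doubling construction.
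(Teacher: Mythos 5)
Your proposal is correct and is exactly the argument the paper intends: the paper states this corollary as an ``immediate'' consequence of Theorem~\ref{thm:twoisrs}, and the implicit reasoning is precisely your pigeonhole step, since $i(R_1)$ and $i(R_2)$ partition $[n]$ and $\sizeof{R_j} = \sizeof{i(R_j)}$ because the parts of a standard partition are pairwise disjoint. Your explicit check of that last identity is a nice touch but introduces nothing beyond what the paper's one-line deduction already assumes.
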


Observe that if $D$ is a minimum dominating set in a graph $G$ without isolates, then each vertex in $D$ has a neighbor in $G-D$. These neighbors can be used to help build inverse dominating sets, and our first use of this will be in the following corollary.

\begin{corollary}
  Let $G$ be a graph without isolated vertices and let $D$ be a minimum dominating set in $G$. If  $b(G)$ is the largest
  number of vertices in an induced bipartite subgraph of $G$, then $\gamma^{-1}(D) \leq b(G)$.
\end{corollary}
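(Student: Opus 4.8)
The plan is to exhibit, disjoint from $D$, an explicit inverse dominating set $D'$ together with an induced bipartite subgraph $H$ for which $|D'| \leq |V(H)|$; since $|V(H)| \leq b(G)$ by definition of $b(G)$, this gives $\gamma^{-1}(G) \leq |D'| \leq b(G)$. The organizing principle is that a union of two independent sets always induces a bipartite subgraph, so any set that decomposes as (independent)\,$\cup$\,(independent) has at most $b(G)$ vertices. The subtlety, which I will address through a charging argument, is that $D'$ itself need \emph{not} be bipartite.

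First I would fix a maximal independent set $F$ of $G[D]$, form the standard partition $(V_1, \ldots, V_n)$ of $Z := G - D - N(F)$ with respect to some ordering of $D - F$, and apply Theorem~\ref{thm:twoisrs} to get two disjoint independent sets $R_1, R_2 \subseteq Z$ whose union is a transversal of $(V_1, \ldots, V_n)$; in particular $R_1 \cup R_2$ dominates $D - F$. I would then extend $R_1$ to a maximal independent set $M$ of $G[G-D]$, so that $M$ dominates all of $G - D$, while $M \cup R_2 \supseteq R_1 \cup R_2$ continues to dominate $D - F$. At this stage $M \cup R_2$ dominates everything except possibly some vertices of $F$.

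Let $U$ be the set of vertices of $F$ not dominated by $M \cup R_2$. Using the observation that every vertex of $D$ has a neighbor in $G - D$ (for $f \in F$ such a neighbor necessarily lies in $N(F) \cap (G-D)$), I would pick one such neighbor $p_f$ for each $f \in U$ and set $D' = M \cup R_2 \cup \{p_f \st f \in U\}$. Then $D'$ is disjoint from $D$ and dominates $G$: the set $M$ handles $G-D$, the set $M \cup R_2$ handles $D - F$ and the dominated part of $F$, and the $p_f$ handle $U$. This is the step where I expect the main obstacle, because the vertices $\{p_f\}$ can be forced to contain an odd cycle (the net graph is an instance of this), so $D'$ need not be bipartite and no naive ``two independent sets'' bound applies to $D'$ directly.

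The crux is then the charging argument. Each $f \in U$ has no neighbor in $M$ (being undominated by $M \cup R_2$), and $U \subseteq F$ is independent, so $M \cup U$ is an independent set; hence $H := (M \cup U) \cup R_2$ is a union of two independent sets and therefore induces a bipartite subgraph. Since $U \subseteq D$ is disjoint from $M \cup R_2 \subseteq G - D$, and since we added at most one $p_f$ per $f \in U$, we obtain $|D'| \leq |M \cup R_2| + |U| = |V(H)| \leq b(G)$. The only routine work is the domination bookkeeping and verifying that $M \cup U$ is genuinely independent; the point of the argument is that replacing the possibly non-bipartite dominators $\{p_f\}$ by the independent vertices $U \subseteq F$ they dominate is exactly what makes the comparison set $H$ bipartite while not decreasing the count.
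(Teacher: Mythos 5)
Your proof is correct and follows essentially the same route as the paper: both start from the two partial ISRs $R_1, R_2$ of Theorem~\ref{thm:twoisrs}, extend them to a set dominating $G-D$, and then charge each undominated vertex of $F$ against adding that vertex itself to the bipartite witness while adding one of its neighbors outside $D$ to the dominating set. The only cosmetic difference is that the paper extends $R_1 \cup R_2$ to a maximal set $B \subseteq G-D$ inducing a bipartite subgraph (whose maximality gives domination of $G-D$), whereas you extend only $R_1$ to a maximal independent set $M$ and keep $R_2$ fixed; both give the same count and the same conclusion.
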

\begin{proof}
  Let $F$ be a maximal independent set in $D$, and let $R_1, R_2$ be partial ISRs as in
  Theorem~\ref{thm:twoisrs}.  As $R_1$ and $R_2$ are each independent and $R_1\cap R_2 =\emptyset$,
  $R_1 \cup R_2$ induces a bipartite subgraph of $G$. Since
  $i(R_1) \cup i(R_2) = [n]$, the set $R_1 \cup R_2$ dominates $D-F$.
  Expand $R_1 \cup R_2$ to a maximal set $B \subseteq G-D$ inducing a
  bipartite subgraph.

  The maximality of $B$ implies that $B$ dominates $G-F$. Let
  $F_0 = F - N(B)$, so that $B$ dominates $G-F_0$. Observe that
  $B \cup F_0$ still induces a bipartite graph, so that
  $b(G) \geq \sizeof{B} + \sizeof{F_0}$.  On the other hand, each
  vertex $v \in F_0$ has some neighbor $v' \in V(G)-D$. Augmenting
  $B$ by adding in such a vertex $v'$ for each $v \in F_0$ yields a
  inverse dominating set of size at most
  $\sizeof{B} + \sizeof{F_0}$, which is at most $b(G)$.
\end{proof}

\section{Proof of Theorem~\ref{thm:three-halves}} \label{sec:three-halves}
\begin{theorem}\label{thm:main}
  Let $G$ be a graph, and let $D$ be a minimum dominating set in $G$. There is
  a set $T \subset V(G) - D$ such that $T$ is a dominating set in $G$ and
  $\sizeof{T} \leq \alpha(G) + \floor{\frac{\gamma(G)-1}{2}}$.
\end{theorem}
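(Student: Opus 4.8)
The plan is to build the dominating set $T$ by combining an inverse-dominating structure coming from Corollary~\ref{cor:ISRhalf} with the maximal independent set $F \subseteq D$. First I would let $F$ be a maximal independent set in $D$, set $n = |D - F|$, fix any ordering of $D - F$, and form the standard partition $(V_1, \ldots, V_n)$ of $G - D - N(F)$. By Corollary~\ref{cor:ISRhalf}, this partition has a partial ISR $R$ with $|R| \geq n/2$; this independent set $R$ lives in $G - D$ and dominates all the vertices $d_i$ with $i \in i(R)$, i.e.\ at least half of $D - F$.

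The next step is to handle the vertices of $D - F$ not yet dominated by $R$, together with the vertices of $F$ themselves. Since $G$ has no isolates and $D$ is a minimum dominating set, every vertex of $D$ has at least one neighbor outside $D$; I would pick one such external neighbor for each undominated vertex of $D$. Let $U \subseteq D - F$ be the set of $d_i$ with $i \notin i(R)$, so $|U| \leq n/2$, and let $F' \subseteq F$ be those vertices of $F$ not already dominated by $R$. Choosing an external neighbor for each vertex of $U \cup F'$ and throwing these chosen neighbors into our set together with $R$ yields a set $T \subseteq V(G) - D$ that dominates all of $D$, hence (after expanding to dominate $V(G) - D$ as well, as in the discussion preceding Theorem~\ref{thm:twoisrs}) dominates $G$.

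To get the size bound I would relate $|T|$ to $\alpha(G)$. The independent set $R$ can be expanded to a maximal independent set $I$ in $G - D$, which is then an independent dominating set of $V(G) - D$ of size at most $\alpha(G)$; the vertices of $R$ contribute to this independent set. The extra neighbors we added for $U \cup F'$ number at most $|U| + |F'|$, and the key is to show this excess is at most $\floor{(\gamma(G)-1)/2}$. Here $\gamma(G) = |D| = |F| + n$, and since $|U| \leq n/2$ and each vertex of $F'$ is an isolated-from-$R$ vertex of the maximal independent set $F$ of $D$, careful bookkeeping—using that $F$ is \emph{maximal} in $D$ so its vertices are somewhat constrained, and that $R$ already handles $\lceil n/2 \rceil$ of the $d_i$—should collapse $|U| + |F'|$ into the desired floor quantity.

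The main obstacle I expect is the interaction between $F$ and $R$: the vertices of $F'$ (those of $F$ not dominated by $R$) must be charged somewhere without double-counting, and it is not automatic that adding an external neighbor for each such vertex keeps us within the $\alpha(G) + \floor{(\gamma(G)-1)/2}$ budget rather than merely $\alpha(G) + \ceil{\gamma(G)/2}$. Getting the exact floor (and the $-1$) will require choosing $F$ and the ordering of $D - F$ advantageously, and possibly arguing that whenever the naive count is too large, one can absorb an external neighbor into the independent set $I$ or exploit an edge among the chosen neighbors to merge the bipartite and independent parts. Squeezing out that final unit to reach $\floor{(\gamma(G)-1)/2}$, rather than $\gamma(G)/2$, is where the delicate parity argument will have to go.
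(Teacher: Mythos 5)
Your setup is the same as the paper's (a maximal independent set $F$ in $D$, a partial ISR $R$ of the standard partition of $G-D-N(F)$ via Corollary~\ref{cor:ISRhalf}, external neighbors for whatever remains undominated), but the accounting step you leave open is the entire content of the proof, and the direction you propose for closing it cannot work. First, note that your $F'$ is all of $F$: since $R$ lives in $G-D-N(F)$, no vertex of $R$ is adjacent to any vertex of $F$, so $R$ dominates nothing in $F$. Your excess is therefore $|U|+|F|$, and demanding $|U|+|F| \leq \floor{\frac{\gamma(G)-1}{2}}$ with $\gamma(G)=n+|F|$ and $|U|$ possibly as large as $\floor{n/2}$ reduces to (roughly) $|F| \leq |F|/2$, impossible since $F \neq \emptyset$. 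Redefining $F'$ relative to the maximal independent set $I \supseteq R$ does not rescue this: nothing in the setup bounds $|F - N(I)|$ by the floor term---it can be nearly all of $D$ when $n$ is small---so no choice of $F$, no ordering of $D-F$, and no parity argument will push $|U|+|F'|$ under $\floor{\frac{\gamma(G)-1}{2}}$.

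The missing idea is that the neighbors chosen for $F'$ must be charged against $\alpha(G)$, not against the floor term. The paper does this as follows: expand $R$ to a maximal independent set $S$ of $G-D$ \emph{first}, and only then set $F' = F - N(S)$. Now $S \cup F'$ is an independent set ($F'$ is independent, is disjoint from $S$, and by definition has no neighbors in $S$), so $|S| + |F'| \leq \alpha(G)$. Consequently, adding one $(G-D)$-neighbor for each vertex of $F'$ yields a set $S_1$ with $|S_1| \leq \alpha(G)$: the $\alpha(G)$ budget pays for $S$ \emph{and} for all the $F'$-neighbors. The only vertices whose chosen neighbors cost extra are the at most $n/2$ vertices of $D-F$ not dominated by $S_1$, giving $|T| \leq \alpha(G) + n/2$; since $F \neq \emptyset$ forces $n \leq \gamma(G)-1$, integrality of $|T|$ yields the floor immediately, with no parity delicacy at all. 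Your instinct that one should ``absorb'' something into the independent set was pointing in the right direction, but what gets absorbed is the undominated vertices of $F$ themselves (inflating the independence count so that their neighbors come for free), not the chosen neighbors.
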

\begin{proof}
  Let $F$ be a maximal independent set in $D$, and write $D - F$ as $\{d_1, \ldots, d_n\}$.
  Let $(V_1, \ldots, V_n)$ be the standard partition of $N(D-F)$.

  Let $R$ be a largest possible partial ISR for $(V_1, \ldots,
  V_n)$. By Corollary~\ref{cor:ISRhalf}, we have
  $\sizeof{R} \geq n/2$.  Expand $R$ to a maximal independent set $S$
  in $G-D$. The set $S$ dominates every vertex of $V(G)-D$ and at
  least $n/2$ vertices of $D-F$. We now expand $S$ to dominate the
  rest of $D$.

  Let $F' = F - N(S)$. Observe that $S \cup F'$ is an independent set,
  so $\sizeof{S} + \sizeof{F'} \leq \alpha(G)$.  Expand $S$ to a set
  $S_1$ by adding an arbitrary $(G-D)$-neighbor of $v'$ for each $v' \in F'$;
  we have $\sizeof{S_1} \leq \sizeof{\alpha(G)}$. Next, expand $S_1$ to a set
  $T$ by adding an arbitrary $(G-D)$-neighbor of $w$ for each $w \in D-F-N(S_1)$;
  note that $\sizeof{D-F-N(S_1)} \leq n/2$, so $\sizeof{T} \leq \alpha(G) + n/2$.
  As $n \leq \gamma(G)-1$ and $\sizeof{T}$ is an integer, this implies that
  \[ \sizeof{T} \leq \alpha(G) + \floor{\frac{\gamma(G)-1}{2}}. \]
  Since $T$ is a dominating set in $G$, the theorem is proved.
\end{proof}
The following lemma is more general than is necessary for proving Theorem~\ref{thm:three-halves},
  but stating it in this generality will be useful for later results.
\begin{lemma}\label{lem:inddom}
  If a graph $G$ has a minimum dominating set $D$ and an independent
  set $S$ such that $S-D$ dominates $D-S$, then $\gamma^{-1}(G) \leq \alpha(G)$.
\end{lemma}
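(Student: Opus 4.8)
The plan is to construct an explicit inverse dominating set $D'$ that is disjoint from $D$ and satisfies $\sizeof{D'} \le \alpha(G)$; since $D$ is a minimum dominating set, such a $D'$ immediately witnesses $\gamma^{-1}(G) \le \alpha(G)$. Write $A = S \cap D$ and $B = S \setminus D$, so that $S = A \cup B$ and both $A$ and $B$ are independent. The hypothesis says precisely that $B$ dominates $D \setminus S = D \setminus A$. The difficulty is that $B$ tells us nothing about how to dominate $A$, nor how to dominate the vertices outside $D$, and that naively throwing in dominators for these could push the size past $\alpha(G)$.

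To dominate the vertices outside $D$ for free, I would extend $B$ to a maximal independent set $M$ of $G - D$ (possible since $B$ is an independent subset of $G - D$). Maximality guarantees that $M$ dominates every vertex of $V(G) - D$, and since $B \subseteq M$ we retain that $M$ dominates $D \setminus A$. Hence the only vertices of $G$ that $M$ may fail to dominate lie in $A$; let $A_0 \subseteq A$ be the set of such vertices. Because $G$ has no isolated vertices and $D$ is minimum, every vertex of $D$ — in particular every vertex of $A_0$ — has a neighbor outside $D$; choosing one such neighbor $v^+$ for each $v \in A_0$ and setting $D' = M \cup \{v^+ : v \in A_0\}$ produces a set disjoint from $D$ that dominates all of $V(G)$.

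The main obstacle — and the crux of the argument — is controlling $\sizeof{D'}$, since it is not a priori clear that replacing the undominated vertices of $A$ by outside neighbors keeps us within $\alpha(G)$. The key observation I would use is that $M \cup A_0$ is itself an independent set: $M$ is independent, $A_0 \subseteq S$ is independent, and no vertex of $A_0$ has a neighbor in $M$, since that is exactly what it means for $v \in A_0$ to be undominated by $M$. Hence $\sizeof{M} + \sizeof{A_0} = \sizeof{M \cup A_0} \le \alpha(G)$, and as $D'$ adds at most one vertex per element of $A_0$ we obtain $\sizeof{D'} \le \sizeof{M} + \sizeof{A_0} \le \alpha(G)$. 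Collecting the domination claims — the vertices outside $D$ and those of $D \setminus A$ are dominated by $M$, the vertices of $A \setminus A_0$ are dominated by $M$ by definition of $A_0$, and the vertices of $A_0$ are dominated by the chosen $v^+$ — then shows $D'$ is an inverse dominating set of the required size. The subtle point to verify carefully is just that each $v \in A_0$ has no neighbor in $M$, which is what makes the independence of $M \cup A_0$, and thus the whole size bound, go through.
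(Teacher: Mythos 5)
Your proof is correct and is essentially identical to the paper's own argument: your $M$ and $A_0$ are precisely the paper's $S_1'$ and $S_2'$, with the same key step that $M \cup A_0$ is independent (hence of size at most $\alpha(G)$) and the same replacement of each undominated vertex of $A_0$ by a neighbor outside $D$. No differences worth noting.
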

\begin{proof}
  Let $S_1 = S-D$ and let $S_2 = S \cap D$. Expand $S_1$ to a maximal independent set $S'_1$ of $G-D$.
  Now $S'_1$ dominates $G-D$. Let $S_2'$ be the set of vertices in $D$
  not dominated by $S'_1$. Observe that $S'_2 \subset S_2$, since by hypothesis
  $S_1$ dominates $D-S_2$. Hence $S'_1 \cup S'_2$ is an independent set,
  so that $\alpha(G) \geq \sizeof{S'_1} + \sizeof{S'_2}$.

  Since $D$ is a minimum dominating set of $G$ and $G$ has no isolated
  vertices, each vertex of $D$ has a neighbor outside of $D$. Let $T$
  be the vertex set obtained from $S'_1$ by adding in, for each
  $v \in S'_2$, a neighbor of $v$ outside of $D$. Now $T$ is a
  dominating set in $G$ and
  $\sizeof{T} \leq \sizeof{S'_1} + \sizeof{S'_2} \leq \alpha(G)$.
  Hence $\gamma^{-1}(G) \leq \alpha(G)$.
\end{proof}

The proof of Theorem~\ref{thm:three-halves} now follows easily. If $G$ has a minimum dominating set $D$ that is independent, then we can choose $S=D$ to vacuously meet the hypothesis of  Lemma~\ref{lem:inddom}, and hence $\gamma^{-1}(G) \leq \alpha(G) \leq (3/2)\alpha(G) - 1$.
Otherwise, $\gamma(G) \leq \alpha(G)-1$, so by Theorem~\ref{thm:main}, we have
\[ \gamma^{-1}(G) \leq \alpha(G) + \floor{\frac{\gamma(G)-1}{2}} \leq \alpha(G) + \floor{\frac{\alpha(G)-2}{2}}
  \leq \frac{3}{2}\alpha(G) - 1. \]

\section{Proof of Theorem~\ref{thm:dom5}}\label{sec:dom5}

Our proof of Theorem~\ref{thm:dom5} relies on a careful choice of
minimum dominating set. For shorthand, it will be convenient to speak
of the \emph{independence number} of a dominating set $D$ to refer to
the independence number of the induced subgraph $G[D]$, and likewise
to write $\alpha(D)$ for $\alpha(G[D])$. We will consider a dominating
set $D$ in a graph $G$ to be \emph{optimal} if it is of minimum size
and, among minimum-size dominating sets, has greatest independence
number and, subject to that, has the fewest edges in the induced
subgraph $G[D]$. In order to build inverse dominating sets in a graph
$G$, we previously used the fact that any vertex $v$ in a minimum
dominating set $D$ has a neighbor in $G-D$ (provided $G$ is
isolate-free). In some arguments, it is helpful if such a neighbor is
\emph{private} with respect to $D$; that is, if we are able to choose
$w\in V(G)-D$ with $N(w)\cap D =\{v\}$. In fact, the choice of a
private neighbor for $v$ is always possible when $D$ is a minimum
dominating set, unless $v$ is isolated in $G[D]$. The following lemma
tells us that if $D$ is optimal, we can improve on this.

\begin{lemma}\label{lem:twoprivate}
  Let $G$ be an isolate-free graph and let $D$ be an optimal dominating set in $G$. If $v \in D$
  is not an isolated vertex in $G[D]$, then $v$ has at least $2$ private
  neighbors with respect to $D$.
\end{lemma}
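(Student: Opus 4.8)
The plan is to argue by contradiction, exploiting the edge-minimizing tiebreaker built into the definition of an optimal dominating set. Suppose $v$ is not isolated in $G[D]$ but has at most one private neighbor. First I would recall the standard observation that $v$ must have at least one private neighbor: if it had none, then every neighbor of $v$ outside $D$ would have a second dominator in $D$, while $v$ itself would be dominated by one of its neighbors in $D$, so $D \setminus \{v\}$ would dominate $G$, contradicting the minimality of $D$. Hence, under our assumption, $v$ has exactly one private neighbor; call it $w$, so that $N(w) \cap D = \{v\}$.

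The key construction is the swapped set $D' = (D \setminus \{v\}) \cup \{w\}$, which I would first verify is again a minimum dominating set: it has the same cardinality as $D$, the vertex $v$ is now dominated by $w$ (and by any of its neighbors in $D$), and any vertex outside $D$ formerly dominated only by $v$ must have been a private neighbor of $v$, hence equal to $w \in D'$. The crucial point is that, because $N(w) \cap D = \{v\}$ and $v \notin D'$, the vertex $w$ is \emph{isolated} in $G[D']$, whereas $v$ was non-isolated in $G[D]$; since the two induced subgraphs agree on $D \setminus \{v\}$, this yields $|E(G[D'])| < |E(G[D])|$.

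To turn this into a contradiction with optimality, I must also check that the swap does not decrease the independence number, i.e.\ that $\alpha(G[D']) \geq \alpha(G[D])$. Taking a maximum independent set $I$ of $G[D]$, I would split into two cases: if $v \notin I$, then $I \subseteq D'$ works unchanged; if $v \in I$, then $(I \setminus \{v\}) \cup \{w\}$ is independent in $G[D']$, again using that $w$'s only neighbor in $D$ is $v$. Either way $\alpha(G[D']) \geq \alpha(G[D])$. Combined with $|E(G[D'])| < |E(G[D])|$, this contradicts the optimality of $D$: if the inequality on independence numbers is strict, we contradict the maximality of $\alpha(G[D])$, and if it is an equality, we contradict the edge-minimizing tiebreaker among sets of maximum independence number. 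Hence $v$ has at least two private neighbors.

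I expect the main obstacle to be precisely this independence-number step: the whole argument only pays off because optimality includes the edge-count tiebreaker, so I must be sure the swap improves the edge count \emph{without} sacrificing independence number—otherwise one merely produces a different minimum dominating set with no contradiction. The domination check and the edge count itself are routine once the correct vertex $w$ has been swapped in and one observes that it lands as an isolated vertex of $G[D']$.
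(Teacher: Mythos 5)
Your proposal is correct and follows essentially the same approach as the paper: both perform the swap $D' = (D \setminus \{v\}) \cup \{w\}$ for a private neighbor $w$, verify that $D'$ is still a minimum dominating set whose independence number has not decreased (by replacing $v$ with the now-isolated $w$ in a maximum independent set), and then contradict the edge-minimizing tiebreaker in the definition of optimality. The only cosmetic difference is that the paper proves the slightly stronger statement that no single vertex dominates the set of private neighbors of $v$ (i.e.\ $\gamma(G_v) > 1$), whereas you split into the cases of zero and exactly one private neighbor, but the core construction and contradiction are identical.
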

\begin{proof}
  Let $G_v$ be the subgraph of $G$ induced by the private neighbors of
  $v$.  We in fact show $\gamma(G_v) > 1$. Suppose to the contrary
  that $G_v$ has a dominating vertex $w$. Let
  $D' = (D - v) \cup \{w\}$. Every vertex of $G-D'$ is either $v$ itself,
  hence dominated by $w$, or a private neighbor of $v$, hence
  dominated by $w$, or a vertex of $G-D$ that is not a private neighbor
  of $D$, hence dominated by $D-v$. Thus, $D'$ is a dominating set.
  Furthermore, as $w$ was a private neighbor of $v$, the vertex $w$
  is an isolated vertex in $D'$. In particular, for any maximum independent
  set $S$ in $D$, we see that $(S - v) \cup \{w\}$ is also a maximum independent
  set in $D'$, so $D'$ has at least as large an independence number as $D$ did.
  As $w$ is isolated in $D'$ but $v$ was not isolated in $D$, we see that $D'$
    has fewer edges than $D$, contradicting the optimality of $D$.
\end{proof}

\begin{lemma}\label{lem:biglemma}
 Let $G$ be an isolate-free graph and let $D$ be an optimal dominating set in $G$. Suppose that the number of isolates in $G[D]$ is $a$. Then either $G$ has an independent set $S$ such that $S-D$
  dominates $D-S$, or all of the following are true:
  \begin{enumerate}
  \item $a+1 \leq \alpha(D) \leq \sizeof{D}-3$,
  \item $\sizeof{V(G)} + a \geq 3\sizeof{D}$, and
  \item $\sizeof{D}\geq a+5$.
  \end{enumerate}
\end{lemma}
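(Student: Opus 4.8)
The plan is to assume that $G$ has \emph{no} independent set $S$ with $S-D$ dominating $D-S$ (otherwise the first alternative holds) and to deduce (1)--(3). Fix a maximum independent set $F$ in $D$; since the isolates of $G[D]$ can always be added to an independent set, $F$ contains all of them. Write $I$ for the set of isolates ($\sizeof{I}=a$), $J$ for the non-isolated vertices ($\sizeof{J}=b:=\sizeof{D}-a$), and set $n:=\sizeof{D-F}=\sizeof{D}-\alpha(D)$. The observation driving everything is a reduction: if the standard partition $(V_1,\ldots,V_n)$ of $Y:=G-D-N(F)$ (with respect to any ordering of $D-F$) admits a full ISR $R$ --- or, more generally, any independent $R\subseteq Y$ dominating all of $D-F$ --- then $S:=F\cup R$ is independent (as $R$ avoids $N(F)$) with $S-D=R$ dominating $D-S=D-F$, contradicting our assumption. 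So no such $R$ exists, and this is the conclusion I will repeatedly contradict. It is also useful to note that $Y=V_1\cup\cdots\cup V_n$, since every vertex of $Y$ is dominated by $D$ but lies outside $D\cup N(F)$.

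I would establish (2) first, as it needs neither the standing assumption nor (1). The key is a Hall-type consequence of minimality: for every $A\subseteq D$ one has $\sizeof{N(A)-D}\geq\sizeof{A}$, since otherwise $(D-A)\cup(N(A)-D)$ would be a smaller dominating set. Applying Hall's theorem to $I$ yields a system of distinct external neighbors $\{z_v : v\in I\}$, one per isolate. Separately, Lemma~\ref{lem:twoprivate} supplies two private neighbors for each of the $b$ non-isolated vertices; across different vertices these $2b$ private neighbors are distinct, and since each is adjacent to exactly one vertex of $D$ --- a vertex of $J$ --- none is adjacent to an isolate, so none equals any $z_v$. Hence $V(G)-D$ contains at least $2b+a$ vertices, giving $\sizeof{V(G)}\geq\sizeof{D}+2b+a=2a+3b=3\sizeof{D}-a$, which is exactly (2).

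For (1), the lower bound $\alpha(D)\geq a+1$ is immediate: if $\alpha(D)=a$ then $D=I$ is independent and $S=D$ satisfies the hypothesis vacuously. For the upper bound I would assume $n\leq 2$ and derive a contradiction. The cases $n=0$ (so $D$ is independent) and $n=1$ (where the lone vertex of $D-F$ is non-isolated by maximality of $F$ and has a private neighbor lying in $Y$) both produce the forbidden $R$ at once. For $n=2$, with $D-F=\{d_1,d_2\}$: if some vertex of $Y$ dominates both, or if the bipartite graph between the classes $V_1,V_2$ has a non-edge, we again obtain an independent $R\subseteq Y$ dominating $\{d_1,d_2\}$; otherwise $V_1$ and $V_2$ are completely joined, and replacing $d_1,d_2$ by private neighbors $x_1\in V_1$, $x_2\in V_2$ gives a minimum dominating set $D'=(D\setminus\{d_1,d_2\})\cup\{x_1,x_2\}$ in which $\{x_1,x_2\}$ is an isolated edge disjoint from and non-adjacent to $F$. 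Then $F\cup\{x_1\}$ is independent, so $\alpha(D')\geq\alpha(D)+1$, contradicting the optimality of $D$.

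Finally, (3) asserts $\sizeof{D}\geq a+5$, i.e.\ $b\geq 5$. Condition (1) already forces $b\geq 4$: the upper bound gives $n\geq 3$, while $\alpha(D)\geq a+1$ gives $n=\sizeof{D}-\alpha(D)\leq b-1$. It remains to rule out $b=4$, which forces $n=3$ and $\alpha(G[J])=1$, i.e.\ $G[J]=K_4$. I expect this borderline clique configuration to be the main obstacle: one must either produce an independent transversal of $(V_1,V_2,V_3)$ from the private neighbors, or, when the adjacencies among those private neighbors obstruct every transversal, perform a private-neighbor replacement in the spirit of the $n=2$ case to manufacture a minimum dominating set of strictly larger independence number, again contradicting optimality. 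Carrying this out --- tracking which vertices of $Y$ dominate more than one $d_i$ and verifying that each replacement remains a dominating set --- is the delicate part; the two-ISR decomposition of Theorem~\ref{thm:twoisrs}, which for $n=3$ splits the indices so that one ISR already covers two of them, should cut down the bookkeeping.
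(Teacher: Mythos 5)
Your treatment of parts (1) and (2) is correct and essentially the paper's own argument: (2) is the same count of $\sizeof{D}$ plus $\sizeof{N(A)}\geq a$ neighbors of the isolates plus $2(\sizeof{D}-a)$ private neighbors from Lemma~\ref{lem:twoprivate} (Hall's theorem is more than you need; the single inequality $\sizeof{N(I)}\geq\sizeof{I}$ already suffices), and your $n\leq 2$ analysis for (1) matches the paper's dichotomy between a nonadjacent pair across the two last parts and a complete bipartite join handled by private-neighbor replacement.

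The genuine gap is part (3). You correctly reduce to the configuration $G[D]\iso aK_1+K_4$, but from there you only offer a plan, deferring what you yourself call ``the delicate part,'' and the plan as stated does not close the case. Your dichotomy --- either $(V_1,V_2,V_3)$ has an independent transversal, or adjacencies among the private neighbors permit a replacement raising $\alpha(D)$ --- is not exhaustive: with three parts, the absence of an independent transversal does \emph{not} force any pair of parts to be completely joined. For example, if $d_i$ has private neighbors $a_i,b_i$ for $i=1,2,3$ and the $a_i$'s form a triangle while the $b_i$'s form a triangle, then every transversal of the private-neighbor sets is blocked, yet no pair $(V_i,V_j)$ is complete bipartite, so the pairwise replacement of the $n=2$ case has nothing to act on. Your appeal to Theorem~\ref{thm:twoisrs} does not rescue this, since the union of the two partial ISRs it produces need not be independent. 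The paper escapes by abandoning the constraint $S\supseteq F$ that underlies your whole framework: it takes the standard partition of \emph{all} of $V(G)-D$ with the $a$ isolates first and the four clique vertices last, and exploits the freedom to put a clique vertex into $S$ itself. Concretely, an independent $S_0\subseteq G-D$ hitting just \emph{three} of the four clique parts already yields the desired $S$ (the at most one undominated clique vertex is simply added to $S$). If no such $S_0$ exists, the paper examines the single pair $(V_{n-3},V_{n-2})$: if completely joined, private-neighbor replacement produces a minimum dominating set containing an independent set of size $n-2>\alpha(D)$; otherwise a nonadjacent pair $v_{n-3},v_{n-2}$ exists and, by the non-extendability assumption, dominates $V_{n-1}\cup V_n$, so that $\{d_1,\ldots,d_{n-4},d_{n-3},d_{n-2},v_{n-3},v_{n-2}\}$ is a dominating set containing the independent set $\{d_1,\ldots,d_{n-4},v_{n-3},v_{n-2}\}$ of size $n-2$, again contradicting optimality. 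Insisting on a full transversal inside $Y=G-D-N(F)$, as you do, both shrinks the search space and demands strictly more than the lemma needs; the missing idea is precisely this ``three out of four'' relaxation and the domination consequence extracted from its failure.
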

\begin{proof}
  Assuming that $G$ has no such independent set $S$, we prove
  each part of the conclusion separately.

  (1)  If $D$ is an independent set, then taking $S=D$ gives the desired independent set. Hence $a+1 \leq \alpha(D) \leq |D|-1$, and we may choose a vertex $d^*\in D$ that is not isolated in $G[D]$. If
 $\alpha(D) = \sizeof{D}-1$, then letting $v^*$ be a private neighbor of $d^*$ and taking $S=(D-d^*)\cup \{v^*\}$ gives the desired independent set.

    Hence we may assume that $\alpha(D) = \sizeof{D}-2$. Let
    $\{d_1, \ldots, d_n\}$ be an ordering of $D$ with
    $\{d_1, \ldots, d_{n-2}\}$ independent, and let
    $(V_1, \ldots, V_n)$ be the standard partition of $N(D)$ with
    respect to this ordering.

    If there is a pair of nonadjacent vertices $v_{n-1} \in V_{n-1}$,
    $v_{n} \in V_n$, then taking
    $S = \{d_1, \ldots, d_{n-2}, v_{n-1}, v_{n}\}$ yields an
      independent set $S$ such that $S-D$ dominates $D-S$.
    Otherwise, there is a complete bipartite graph between $V_{n-1}$
    and $V_n$. Taking $v_{n-1}^*$ and $v_n^*$ to be private neighbors
    of $d_{n-1}$ and $d_n$ respectively, we see that
    $\{d_1, \ldots, d_{n-2}, v_{n-1}^*, v_n^*\}$ is a dominating set
    in $G$ having independence number $n-1$, contradicting the
    optimality of $D$.

  (2) Let $A$ be the set of $a$ isolated vertices in $D$. Notice
    that if $|N(A)|<|A|$, then $(D-A)\cup N(A)$ is a dominating set of
    size less than $D$, which is impossible. Hence, $|N(A)|\ge
    |A|$. We count $|A|$ as well as $|N(A)|$ and then apply
    Lemma~\ref{lem:twoprivate}, which implies that
    $\sizeof{V(G)} \geq \sizeof{D} + a + 2(\sizeof{D} - a) =
    3\sizeof{D} - a$.

  (3) Suppose $\sizeof{D} \leq a+4$.  By (1) we get
    $\sizeof{D}-3 \geq a+1$, so in fact $\sizeof{D}=a+4$. Moreover, by
    (1), this means that $\alpha(D)=a+1$, so
    $G[D] \iso aK_1 + K_4$.

    Write $D = \{d_1, \ldots, d_n\}$ with $d_1, \ldots, d_{n-4}$
    isolated in $G[D]$, and let $(V_1, \ldots, V_n)$ be the standard
    partition of $V(G)-D$ with respect to this ordering. Suppose first
    that there is an independent set $S_0$ in $G-D$ hitting at least
    three of the sets $\{V_{n-3}, \ldots, V_n\}$. Then define $S$ to
    be $S_0\cup\{d_1, \ldots, d_{n-4}\}$; note $S$ is independent. Out
    of the four vertices in $D-S$, at most one is not dominated by
    $S-D$. However, if such a vertex exists, then we can add it to $S$
    as well, without violating independence. Thus, we may assume that
    no such set $S_0$ exists.

    If the pair $(V_{n-3}, V_{n-2})$ is joined by a complete bipartite
    graph, then we may take $v_{n-3}^*$ and $v_{n-2}^*$ to be private
    neighbors of $d_{n-3}$ and $d_{n-2}$ respectively. Now
    \[ (D \setminus \{d_{n-3}, d_{n-2}\}) \cup \{v^*_{n-3}, v^*_{n-2}\} \]
    is a dominating set of $G$ of size $\sizeof{D}$ containing the
    independent set \[ \{d_1, \ldots, d_{n-4}, v^*_{n-3}, d_{n-1}\} \] of
    size $n-2$, contradicting the optimality of $D$.

    Otherwise, there is a pair of nonadjacent vertices
    $v_{n-3} \in V_{n-3}$ and $v_{n-2} \in V_{n-2}$.  Since, by
    assumption, this pair cannot be extended to an independent set
    that also hits one of the sets $V_{n-1}$ or $V_n$, we see that
    $\{v_{n-3}, v_{n-2}\}$ dominates $V_{n-1} \cup V_n$.  Thus
    \[\{d_1, \ldots, d_{n-4}\} \cup \{d_{n-3}, d_{n-2}, v_{n-3},
    v_{n-2}\}\] is a dominating set in $G$ containing the independent
    set \[\{d_1, \ldots, d_{n-4}, v_{n-3}, v_{n-2}\},\] contradicting
    the optimality of $D$.
\end{proof}

In the remainder of the section we will prove the inverse domination
conjecture for graphs $G$ with $\gamma(G) \leq 5$. In light of the
following lemma, it will suffice to prove the conjecture for graphs
with domination number exactly $5$.

\begin{lemma}\label{lem:leq}
  Let $k$ be a positive integer.  If $\gamma^{-1}(G) \leq \alpha(G)$
  for every isolate-free graph $G$ with $\gamma(G)=k$, then $\gamma^{-1}(G) \leq \alpha(G)$
  for every isolate-free graph $G$ with $\gamma(G) \leq k$.
\end{lemma}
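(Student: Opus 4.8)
The plan is to reduce the general statement to the hypothesis by a \emph{padding} argument. Let $G$ be an isolate-free graph with $\gamma(G) = j \leq k$. If $j = k$, the conclusion is immediate from the hypothesis, so assume $j < k$. I would form a new graph $G'$ by taking the disjoint union of $G$ with $k - j$ additional copies of $K_2$. Each added edge forms its own isolate-free component, so $G'$ is again isolate-free, and since the domination number of a disjoint union is the sum of the domination numbers of the components and $\gamma(K_2) = 1$, we get $\gamma(G') = j + (k-j) = k$. Thus the hypothesis applies to $G'$ and yields $\gamma^{-1}(G') \leq \alpha(G')$.

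The crux is then to relate the three parameters of $G'$ back to those of $G$. Since $\alpha$ is additive over disjoint unions and $\alpha(K_2) = 1$, we have $\alpha(G') = \alpha(G) + (k-j)$. The analogous additivity statement for $\gamma^{-1}$ over disjoint unions of isolate-free graphs is the key step, and it requires the following observation: a minimum dominating set $D$ of a disjoint union $G_1 \cup G_2$ must restrict to a minimum dominating set on each component, and likewise any dominating set $D'$ decomposes as $(D' \cap V(G_1)) \cup (D' \cap V(G_2))$ with each part dominating its component. The disjointness condition $D \cap D' = \emptyset$ then decouples completely across the components, so minimizing $\sizeof{D'}$ is the same as independently minimizing on each piece. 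Applying this with $K_2$ (for which $\gamma^{-1}(K_2) = 1$) repeatedly gives $\gamma^{-1}(G') = \gamma^{-1}(G) + (k-j)$.

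Combining these, the inequality $\gamma^{-1}(G') \leq \alpha(G')$ becomes
\[ \gamma^{-1}(G) + (k-j) \leq \alpha(G) + (k-j), \]
and subtracting $k-j$ from both sides yields $\gamma^{-1}(G) \leq \alpha(G)$, as desired.

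I expect the main obstacle to be the additivity of $\gamma^{-1}$ under disjoint union, since $\gamma^{-1}$ is defined through an existential quantifier over minimum dominating sets rather than as a straightforward minimum. The care is in verifying that every minimum dominating set of the union is forced to split as a union of minimum dominating sets of the components (using additivity of $\gamma$), so that the constraint ``$D'$ avoids some minimum $D$'' separates cleanly into independent per-component constraints; once this is checked, the rest of the argument is routine.
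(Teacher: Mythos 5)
Your proposal is correct and takes essentially the same approach as the paper: both pad $G$ with $k-\gamma(G)$ disjoint copies of $K_2$, apply the hypothesis to the resulting graph $G'$, and recover the required pair of disjoint dominating sets in $G$ by restricting to $V(G)$. The only cosmetic difference is that you package the restriction step as an additivity statement for $\gamma^{-1}$ over disjoint unions (of which only the superadditivity direction is actually needed), whereas the paper argues directly that the minimum dominating set $D'$ and the inverse dominating set $T'$ of $G'$ must each contain exactly one vertex from every added $K_2$, so their restrictions to $V(G)$ have the required sizes.
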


\begin{proof}
  Let $G$ be an isolate-free graph with $\gamma(G) \leq k$, and let $t = k-\gamma(G)$.
  Let $G'$ be the disjoint union of $G$ and $t$ copies of $K_2$. Now $\gamma(G') = \gamma(G) + t = k$, so by hypothesis,
  $\gamma^{-1}(G') \leq \alpha(G') = \alpha(G) + t$. In particular, in $G'$ we can choose a minimum dominating set $D'$ and a second disjoint dominating set $T'$ with $\sizeof{T'} \leq \alpha(G')$.  Observe that $D'$ and $T'$ must each
  contain one vertex from every added copy of $K_2$. Hence, letting $D = D' \cap V(G)$
  and $T = T' \cap V(G)$, we see that $\sizeof{D} = \sizeof{D'} - t = \gamma(G)$
  and $\sizeof{T} \leq \alpha(G') - t = \alpha(G)$. Furthermore, $D$ and $T$
  are dominating sets in $G$. Hence, $\gamma^{-1}(G) \leq \alpha(G)$.
\end{proof}

We wish to strengthen the conclusion of Theorem~\ref{thm:twoisrs} by
eliminating the maximal independent set $F$ inside $D$, and
instead finding a pair of ISRs that jointly dominate the entire
minimum dominating set $D$. When $\gamma(G) = 5$ and
$\alpha(D) \leq 2$, we are able to do this.

\begin{lemma}\label{lem:superisrs}
  Let $D$ be an optimal dominating set in an isolate-free graph
  $G$. Suppose that $\sizeof{D} = 5$, that $\alpha(D) \leq 2$, and
  that $G[D]$ has no isolated vertices.  Then there is an ordering
  $(d_1, \ldots, d_5)$ of $D$ and a pair of independent sets $R_1$ and
  $R_2$ such that $R_1$ is an ISR for $(V_1, V_2, V_3)$ and $R_2$ is
  an ISR for $(V_4, V_5)$, where $(V_1, \ldots, V_5)$ is the
    standard partition of $G-D$ with respect to this ordering.
\end{lemma}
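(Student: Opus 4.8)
The plan is to build the two independent sets $R_1$ and $R_2$ out of private neighbors. Since $G[D]$ has no isolated vertices, Lemma~\ref{lem:twoprivate} guarantees that each $d_i \in D$ has at least two private neighbors with respect to $D$; write $P_i$ for this set, so $\sizeof{P_i} \geq 2$. The first thing I would record is that a private neighbor $p \in P_i$ is adjacent to $d_i$ and to no other vertex of $D$, so $p$ lands in the part $V_i$ of the standard partition no matter where $i$ is placed in the ordering. Consequently, it suffices to choose representatives $p_i \in P_i$ and a partition of $\{1, \ldots, 5\}$ into a triple $A$ and a pair $B$ so that $\{p_i : i \in A\}$ and $\{p_i : i \in B\}$ are each independent; I would then order $D$ so that $A$ occupies positions $1,2,3$ and $B$ occupies positions $4,5$, and set $R_1 = \{p_i : i \in A\}$ and $R_2 = \{p_i : i \in B\}$. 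In other words, I need a choice of private representatives whose induced subgraph is bipartite and admits a bipartition into parts of sizes $3$ and $2$.

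Next I would reformulate the only way this can fail. Given a choice of representatives, the obstruction to a $(3,2)$ bipartition is an induced subgraph on the five reps that is either non-bipartite or bipartite only with an unbalanced bipartition such as that of $K_{1,4}$; the freedom to reselect each $p_i$ inside $P_i$ (each of size at least $2$) is what I would exploit to avoid these. The key local obstruction is a pair of indices $i,j$ for which $G$ contains a complete bipartite graph between $P_i$ and $P_j$, since only then is the adjacency between the $i$ and $j$ representatives forced. So the heart of the argument is to bound how many such rigid pairs can occur and to show that, together with $\alpha(D) \leq 2$, the rigid pairs can always be routed so that the non-rigid pairs carry the cross edges of a bipartition into parts of sizes $3$ and $2$. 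One could instead run a two-copies argument in the spirit of Theorem~\ref{thm:twoisrs}, which yields two independent ISRs whose index sets partition $\{1,\ldots,5\}$; but the resulting split might be $(5,0)$ or $(4,1)$ rather than $(3,2)$, and repairing it forces one back to the same adjacency analysis, so I would take the private-neighbor formulation as the main line.

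To control the rigid pairs I would use the optimality of $D$ in the style of Lemma~\ref{lem:biglemma}. When $P_i$ and $P_j$ are joined completely, a single representative of $P_i$ already dominates all of $P_j$, and vice versa, and this is exactly the leverage that let the swaps in the proof of Lemma~\ref{lem:biglemma} succeed. For suitable pairs I would replace $\{d_i,d_j\}$ in $D$ by two private neighbors, check that the complete bipartite structure re-dominates $V_i \cup V_j$, and thereby produce a dominating set of the same size whose induced independence number exceeds $\alpha(D)$ (or whose edge count is smaller), contradicting optimality. Since $\sizeof{D} = 5$ and $\alpha(D) \leq 2$, the admissible shapes of $G[D]$ form a short list (ranging from $K_5$ when $\alpha(D) = 1$ down to graphs such as $C_5$ and $K_2 \cup K_3$ when $\alpha(D) = 2$), so I would finish by verifying, for each shape, that once the forbidden rigid configurations are eliminated a valid $(3,2)$ split of representatives always remains.

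The step I expect to be the main obstacle is precisely the handling of complete bipartite joins between private-neighbor sets. The naive swap, deleting $d_i$ and $d_j$ and inserting one private neighbor of each, does not by itself preserve domination, because a private neighbor $p_i$ need not dominate the other members of $P_i$, which could be left uncovered. Making the swap legitimate therefore requires using the complete bipartite join so that a representative of $P_j$ mops up $P_i$ (or vice versa), while simultaneously arranging that the swap strictly improves one of the optimality parameters in the correct priority order: size, then independence number, then edge count. Threading domination-preservation together with strict improvement through every rigid configuration, for each admissible shape of $G[D]$, is the delicate part of the proof.
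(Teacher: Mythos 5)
Your strategy has a genuine gap at its core step, and a second flaw in the supporting swap argument. The core gap: you reduce the problem to avoiding ``rigid pairs'' $(i,j)$, where $P_i$ and $P_j$ are joined by a complete bipartite graph, on the grounds that only for such pairs is adjacency between representatives forced. That is true pair-by-pair, but your construction must choose representatives for the \emph{triple} $A$ simultaneously, and pairwise freedom does not imply a simultaneous choice. Concretely, take $P_1=\{a_1,a_2\}$, $P_2=\{b_1,b_2\}$, $P_3=\{c_1,c_2\}$ with edges $a_1b_1$, $a_2b_2$, $a_1c_1$, $a_1c_2$, $b_1c_1$, $b_1c_2$: no two of these parts are completely joined, yet every transversal contains an edge. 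So ``no rigid pair inside $A$'' does not yield an independent set of representatives for $A$; this local-to-global leap is exactly the difficulty that makes ISR problems nontrivial, and your proposal never addresses it. The paper's proof is engineered so that it never has to satisfy more than one pairwise constraint at a time: it first finds nonadjacent $r_1\in V_1$, $r_2\in V_2$ (a single pair), then uses $\gamma(G)=5$ --- no four vertices dominate $G$ --- to pick $r_3$ \emph{undominated} by $\{d_1,d_2,r_1,r_2\}$, which makes $\{r_1,r_2,r_3\}$ independent for free; the vertex $d_3$ is only defined afterwards as a $D$-neighbor of $r_3$, so the ordering of $D$ is built adaptively around the representatives rather than fixed in advance as in your plan.

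The second flaw is the one you flagged but resolved incorrectly: your optimality swaps need not preserve domination. If $P_i$ and $P_j$ are completely joined and you replace $d_i,d_j$ by $p_i\in P_i$, $p_j\in P_j$, then $P_i\cup P_j\cup\{d_i,d_j\}$ is indeed re-dominated (the ``mopping up'' you describe), but a vertex adjacent to both $d_i$ and $d_j$ and to no other vertex of $D$ lies in neither $P_i$ nor $P_j$, and nothing dominates it after the swap. This is precisely why the paper runs its complete-bipartite-join arguments on the standard partition parts $V_i$, which partition all of $G-D$, rather than on private-neighbor sets: a complete join between $V_1$ and $V_2$ guarantees that a private neighbor $v_2^*\in V_2$ dominates all of $V_1$, including these shared neighbors, so the swapped set still dominates. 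Weakening the join condition to the $P_i$, as you do, makes the contradiction unavailable; strengthening it back to the $V_i$ destroys your key property that representatives sit in parts that are fixed independently of the ordering. Both gaps would require new ideas to repair, and the paper's adaptive-ordering construction is the mechanism that sidesteps them.
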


\begin{proof}
  Choose $d_1, d_2 \in D$ so that $\{d_1, d_2\}$ is an independent
  set, if possible. (Thus, $d_1d_2 \in E(G)$ only if $D$ is a clique.)
  Note that since $\alpha(D) \leq 2$, the set $\{d_1, d_2\}$ contains
  a maximal independent set in $D$, hence dominates
  $D - \{d_1, d_2\}$.  This implies that there are at least $3$ edges
  from $\{d_1, d_2\}$ to the rest of $D$.

  First we argue that there is an independent set $\{r_1, r_2\}$ with
  $r_i \in V_i$.  If not, then $V_1$ and $V_2$ are joined by a
  complete bipartite graph. Let $v_1^*$ and $v_2^*$ be private
  neighbors of $d_1$ and $d_2$ respectively. Observe that
  $\{v_1^*, v_2^*\} \cup (D \setminus \{d_1, d_2\})$ is a dominating set of
  $D$. Furthermore, there are no edges between $\{v_1^*, v_2^*\}$ and
  $D \setminus \{d_1, d_2\}$.  This implies that
  $\sizeof{E(D')} \leq \sizeof{E(D)} - 2$, contradicting the
  optimality of $D$. (Note that $\alpha(D') \geq \alpha(D)$
  since $\alpha(D) \leq 2$.)

  Now, since $D$ is a minimal dominating set of $G$, there is some
  vertex $r_3 \in V(G)$ not dominated by $\{d_1, d_2, r_1, r_2\}$. As
  $\{d_1, d_2\}$ dominates $D$, we have $r_3 \in V(G) - D$.  Choose
  $d_3$ to be a neighbor of $r_3$ in $D$. Let
  $R_1 = \{r_1, r_2, r_3\}$, and let $d_4$ and $d_5$ be the remaining
  vertices of $D$, ordered arbitrarily. Observe that $R_1$ is an ISR
  for $(V_1, V_2, V_3)$ in the standard partition of $V(G)-D$ with
  respect to this ordering.  It remains to find the desired $R_2$.
    
  We claim that there are nonadjacent vertices $r_4, r_5$ each with
  $r_i \in V_i$. If not, then $V_{4}$ and $V_{5}$ are joined by a
  complete bipartite graph. Let $v_{4}^*$ and $v_5^*$ be private
  neighbors of $d_4$ and $d_5$ respectively. Now
  $D' = \{d_1, d_2, d_3, v_{4}^*, v_5^*\}$ is a dominating set in
  $D$. Furthermore, since $\{d_1, d_2\}$ is a dominating set in $D$,
  there are at least two edges in the cut
  $[\{d_1, d_2, d_3\}, \{d_{4}, d_5\}]$, while by contrast there are
  no edges joining $\{v_{4}^*, v_5^*\}$ with $\{d_1, d_2,
  d_3\}$. Hence $\sizeof{E(D')} \leq \sizeof{E(D)} - 1$, contradicting the
  optimality of $D$. (Again $\alpha(D') \geq \alpha(D)$ since
    $\alpha(D) \leq 2$.)
\end{proof}
\begin{theorem}\label{thm:gamma5}
  If $G$ is an isolate-free graph with $\gamma(G) = 5$, then $G$ has a minimum dominating set $D$ such
  that $\gamma^{-1}(D) \leq \alpha(G)$.
\end{theorem}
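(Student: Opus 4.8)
The plan is to take $D$ to be an \emph{optimal} dominating set, so that $\sizeof{D} = \gamma(G) = 5$, and to feed it into Lemma~\ref{lem:biglemma}. If that lemma produces an independent set $S$ with $S - D$ dominating $D - S$, then Lemma~\ref{lem:inddom} immediately supplies a dominating set disjoint from $D$ of size at most $\alpha(G)$, and we are done. Otherwise conclusions (1)--(3) of Lemma~\ref{lem:biglemma} hold; since $\sizeof{D} = 5$, conclusion (3) forces the number $a$ of isolated vertices of $G[D]$ to be $0$, conclusion (1) then gives $1 \le \alpha(D) \le 2$, and conclusion (2) gives $\sizeof{V(G)} \ge 15$. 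In particular $G[D]$ has no isolated vertices and $\alpha(G) \ge \gamma(G) = 5$. One consequence of landing in this branch is worth recording: there is no independent set in $G - D$ dominating $D$, since such a set would satisfy the hypothesis of the discarded alternative. Thus the inverse dominating set we ultimately build \emph{cannot} be independent, and the bound $\sizeof{T} \le \alpha(G)$ will have to come from slack in $\alpha(G)$ rather than from independence alone.

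At this point $D$ meets the hypotheses of Lemma~\ref{lem:superisrs}, so we obtain an ordering $(d_1, \ldots, d_5)$ and independent sets $R_1$ (an ISR for $(V_1, V_2, V_3)$) and $R_2$ (an ISR for $(V_4, V_5)$), where $(V_1, \ldots, V_5)$ is the standard partition of $G - D$. The construction I would use is to expand $R_1$ to a maximal independent set $S$ of $G - D$. Then $S$ dominates all of $G - D$ by maximality and dominates $d_1, d_2, d_3$ since $r_i \in V_i \subseteq N(d_i)$, so the only vertices possibly left undominated lie in $W := \{d_4, d_5\} \setminus N(S)$. Splicing in a $(G-D)$-neighbor for each vertex of $W$ yields a dominating set $T$ disjoint from $D$ with $\sizeof{T} \le \sizeof{S} + \sizeof{W}$.

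The bookkeeping is governed by the fact that $S \cup W'$ is independent for every independent $W' \subseteq W$, since the vertices of $W$ lie in $D$ and are undominated by the maximal independent set $S$; hence $\alpha(G) \ge \sizeof{S} + \alpha(G[W])$. When $\sizeof{W} \le 1$, or when $\sizeof{W} = 2$ but $d_4 \not\sim d_5$, this gives $\sizeof{T} \le \sizeof{S} + \sizeof{W} \le \alpha(G)$ and we are finished. There are two further cheap escapes when $W = \{d_4, d_5\}$: if some representative dominates both undominated vertices (for instance if $r_4 \sim d_5$), or more generally if $d_4$ and $d_5$ share a neighbor $w \in G - D$, then one added vertex suffices and $\sizeof{T} \le \sizeof{S} + 1 \le \alpha(G)$.

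The single stubborn configuration, and the step I expect to be the main obstacle, is when $W = \{d_4, d_5\}$ with $d_4 \sim d_5$ and $d_4, d_5$ have no common neighbor outside $D$. Here the two-neighbor splice only gives $\sizeof{T} \le \sizeof{S} + 2$, while the independence bound above certifies only $\alpha(G) \ge \sizeof{S} + 1$, leaving a deficit of one. To close the gap I would certify $\alpha(G) \ge \sizeof{S} + 2$ by exhibiting an independent set two larger than $S$. The leverage is precisely what the earlier lemmas supply: the representatives $r_4 \in N(d_4)$ and $r_5 \in N(d_5)$ are nonadjacent and avoid $\{d_1, d_2, d_3\}$, each $d_i$ has at least two private neighbors by Lemma~\ref{lem:twoprivate}, and the bound $\sizeof{V(G)} \ge 15$ constrains how those private neighbors can overlap. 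The idea is to perform a swap on $S$, deleting the $S$-neighbors that block $r_4$ and $r_5$ and reinserting $r_4, r_5$ alongside a private neighbor, and then to argue from the optimality of $D$ (in the style of Lemmas~\ref{lem:twoprivate} and~\ref{lem:superisrs}) that this either yields the required larger independent set or else forces the common neighbor assumed absent. Showing that this swap can always be executed, and ruling out the degenerate overlaps among the private-neighbor sets, is where the real work of the proof of Theorem~\ref{thm:gamma5} lies.
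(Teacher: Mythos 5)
Your reduction is sound as far as it goes: taking $D$ optimal, discharging the easy branch of Lemma~\ref{lem:biglemma} via Lemma~\ref{lem:inddom}, invoking Lemma~\ref{lem:superisrs}, expanding $R_1$ to a maximal independent set $S$ of $G-D$, and patching with outside neighbors. The cases $\sizeof{W}\le 1$, $W$ independent, and $d_4,d_5$ having a common neighbor outside $D$ are all handled correctly. But the configuration you call ``stubborn'' is precisely the content of the theorem, and your sketch for it is not a proof; I do not believe it can be repaired in the form you propose. By expanding to a maximal independent set $S$ \emph{first}, you commit to a base of size $\sizeof{S}$, which can be much larger than $5$, and you then need to certify $\alpha(G)\ge\sizeof{S}+2$; the only free certificate available is $S\cup\{d_4\}$ (or $S\cup\{d_5\}$), giving $\alpha(G)\ge\sizeof{S}+1$, and nothing in the hypotheses guarantees an independent set two larger than a maximal independent set of $G-D$. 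Your proposed swap (delete the $S$-blockers of $r_4,r_5$, reinsert $r_4,r_5$ alongside a private neighbor) has two concrete defects: if $r_4$ and $r_5$ have distinct single blockers the swap does not increase the size at all, and deleting vertices of $S$ can destroy domination of $G-D$, so the swapped set is neither a bigger independent set nor a usable base for patching. The paper never attempts to prove $\alpha(G)\ge\sizeof{S}+2$, and for good reason.

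The paper's resolution instead works with the small set $R_1\cup R_2$ (size $5$) rather than the inflated $S$, and hinges on three ingredients absent from your proposal: (i) the bound $\alpha(G)\ge 6$, which follows from optimality of $D$ (if $\alpha(G)=5$, a maximum independent set would be a minimum dominating set with independence number $5>2\ge\alpha(D)$) --- you record only $\alpha(G)\ge 5$; (ii) an extremal choice: among all pairs supplied by Lemma~\ref{lem:superisrs}, take $(R_1,R_2)$ minimizing the number of edges between $R_1$ and $R_2$; this powers swap arguments showing that the set $T$ of vertices undominated by $R_1\cup R_2$ meets only one part $V_k$, and that every vertex of $\bigcup_{j\ne k}V_j$ undominated by $R^*=(R_1\cup R_2)\setminus\{r_k\}$ is adjacent to all of $T$; and (iii) an endgame in which either some such vertex $w$ exists and $R_1\cup R_2\cup\{w\}$ is an inverse dominating set of size $6\le\alpha(G)$, or else $R^*\cup\{d_k\}$ is a minimum dominating set containing the independent set $\{d_k,r_4,r_5\}$, contradicting $\alpha(D)\le 2$ and the optimality of $D$. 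So the missing case is not a technical loose end to be closed by local modifications of $S$: it requires keeping the candidate dominating sets of bounded size (at most $6$) and deriving a contradiction from the optimality of $D$, a mechanism genuinely different from the $\alpha(G)\ge\sizeof{S}+\alpha(G[W])$ bookkeeping your construction relies on.
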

\begin{proof}
  Let $D$ be an optimal dominating set in $G$. By Lemma
  \ref{lem:inddom} and by parts (1) and (3) of
  Lemma~\ref{lem:biglemma}, we may assume that $\alpha(D) \leq 2$ and
  that $D$ has no isolated vertices.  In particular, since $D$ is not
  an independent set, we have $\alpha(G) \geq 6$, a fact we will use
  later.

  By Lemma~\ref{lem:superisrs}, we see that there is an ordering
  $(d_1, \ldots, d_5)$ of $D$ and a pair of independent sets
  $R_1, R_2$ such that $R_1$ is an ISR for $(V_1, V_2, V_3)$ and $R_2$
  is an ISR for $(V_4, V_5)$, where $(V_1, \ldots, V_5)$ is the
  standard partition of $G-D$ for the given ordering. Among all such
  pairs $(R_1, R_2)$, choose $R_1$ and $R_2$ to minimize the number of
  edges from $R_1$ to $R_2$.

  If $(V_1, \ldots, V_5)$ has a partial ISR of size $4$, then we
  immediately get the desired conclusion: taking $R$ to be such an
  ISR, we see that $R$ dominates all of $D$ except possibly for a
    single vertex $w \in D$, so we win by letting $S = R \cup \{w\}$
    (or $S=R$) and applying Lemma~\ref{lem:inddom}.

  Thus, $(V_1, \ldots, V_5)$ has no partial ISR of size $4$, which
  implies that $R_1$ is a maximal partial ISR of this family, and
  so $R_1$ dominates $V_4 \cup V_5$.

  Let $T$ be the set of vertices in $G$ that are not dominated by $R_1 \cup R_2$.  If
  $T = \emptyset$ then we immediately have the desired conclusion, as
  $R_1 \cup R_2$ is an inverse dominating set of size $\gamma$. Thus
  we may assume that $T$ is a nonempty subset of $V(G)-D$, and in particular, $T \subseteq V_1\cup V_2\cup V_3$.

  Write $R_1 = \{r_1, r_2, r_3\}$ with $r_i \in V_i$. We claim that if
  $T$ intersects $V_j$ for some $j\in\{1,2,3\}$, then the
  corresponding vertex $r_j$ is not adjacent to any vertex of
  $R_2$. Otherwise, let $r'_j \in T\cap V_j$, and let
  $R'_1 = (R_1 \setminus \{r_j\}) \cup \{r'_j\}$.  Now $R'_1$ is an
  ISR of $(V_1, V_2, V_3)$ and, since $r'_j$ is not dominated by
  $R_1 \cup R_2$, there are fewer edges between $R'_1$ and $R_2$ than
  there were between $R_1$ and $R_2$.  This contradicts the choice of
  $R_1 \cup R_2$, establishing the claim.

  In particular, the above claim implies that $\sizeof{i(T)} = 1$, since if
  $\sizeof{i(T)} \geq 2$, then taking distinct $j, k \in i(T)$, we see
  that $R_2 \cup \{r_j, r_k\}$ is a partial ISR of
  $(V_1, \ldots, V_5)$ having size $4$, contradicting our earlier
  claim that the largest such partial ISR has size $3$.

  Let $k$ be the unique index in $i(T)$. Let
  $R^* = (R_1 \cup R_2) \setminus \{r_k\}$. We next claim that any vertex
  of $\bigcup_{j \neq k}V_j$ not dominated by $R^*$ is adjacent to all of $T$.
  Otherwise, let $v_j$ be such a vertex that is not adjacent to all of $T$,
  with $v_j \in V_j$.
  
  Let $v_k$ be a vertex of $T$ not adjacent to $v_j$. If $j \in \{1,2,3\}$, then let $R'_2 = R_2 \cup \{v_j, v_k\}$. Now
  $R'_2$ is an independent set, since $R_2 \subset R^*$ and neither $v_j$ nor $v_k$ is
  dominated by $R^*$ (by choice of $v_j$ and because $v_k\in T$). As $i(R_2) = \{4,5\}$
  this implies that $R'_2$ is a partial ISR of $(V_1, \ldots, V_5)$
  having size $4$, contradicting the earlier claim that the largest
  such ISR has size $3$. If instead $j \in \{4,5\}$, then taking
  $R'_1 = (R_1 \setminus \{r_k\}) \cup \{v_j, v_k\}$ gives the same
  contradiction.

  Hence, any vertex of $\bigcup_{j \neq k}V_j$ not dominated by $R^*$
  is adjacent to all of $T$. If there is any vertex of $\bigcup_{j \neq k}V_j$
  not dominated by $R^*$, then let $w$ be such a vertex;
  now $R_1 \cup R_2 \cup \{w\}$ is an inverse dominating set of size $6$, where
  $\alpha(G) \geq 6$, and we are done. Hence, we may assume that $R^*$
  dominates $\bigcup_{j \neq k} V_j$.

  In this case, let $D' = R^* \cup \{d_k\}$. Since $R^*$ dominates
  $\bigcup_{j \neq k} V_j$, we see that $D'$ is a dominating set of
  $G$. Since $k \leq 3$, the set $\{d_k, r_4, r_5\}$ is an independent
  set: if $d_k$ were adjacent to $r_4$, this would imply
  $r_4 \in V_k$, contradicting $r_4 \in V_4$, and likewise for $r_5$.
  This contradicts the optimality of $D$.
\end{proof}
\begin{corollary}
  If $\sizeof{V(G)} \leq 16$ then $\gamma^{-1}(G) \leq \alpha(G)$.
\end{corollary}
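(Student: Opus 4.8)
The plan is to derive Corollary~\ref{cor:v16} from Theorem~\ref{thm:dom5} by showing that a graph $G$ on at most $16$ vertices with no isolated vertices must satisfy $\gamma(G) \leq 5$. The key observation is a general lower-bound relationship between $|V(G)|$ and $\gamma(G)$ for isolate-free graphs. Since $G$ has no isolated vertices, every vertex has degree at least~$1$, so a vertex together with its closed neighborhood dominates at least two vertices. More precisely, I would use the fact that in an isolate-free graph, a minimum dominating set $D$ admits the partition structure from Section~\ref{sec:isrs}: taking $F$ to be a maximal independent set in $D$, by Lemma~\ref{lem:twoprivate} (applied to an optimal $D$) each non-isolated vertex of $D$ has at least two private neighbors outside $D$, while each isolated vertex of $G[D]$ still has at least one neighbor outside $D$.

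The cleanest route, however, is a direct counting argument that does not even require optimality. First I would recall the elementary bound that for any isolate-free graph, $\gamma(G) \leq |V(G)|/2$; this is Ore's classical result, which follows because if $D$ is a minimum dominating set then $V(G)-D$ is also a dominating set (as noted in the Introduction), so $|D| \leq |V(G) - D|$ forces $|D| \leq |V(G)|/2$. Applying this with $|V(G)| \leq 16$ gives $\gamma(G) \leq 8$, which is unfortunately not yet strong enough. To close the gap I would invoke the sharper structural input from part~(2) of Lemma~\ref{lem:biglemma}: for an optimal dominating set $D$ with $a$ isolated vertices in $G[D]$, either $G$ admits the good independent set $S$ (in which case Lemma~\ref{lem:inddom} already gives $\gamma^{-1}(G) \leq \alpha(G)$ and we are done directly), or $|V(G)| + a \geq 3|D|$. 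Since $a \leq |D|$ always, this yields $|V(G)| \geq 3|D| - a \geq 2|D|$, but more usefully, combining $a \leq \alpha(D) - 1 \leq |D| - 4$ from parts~(1) and~(3) with $|V(G)| + a \geq 3|D|$ gives $|V(G)| \geq 3|D| - (|D|-4) = 2|D| + 4$.

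From $|V(G)| \geq 2|D| + 4$ and $|V(G)| \leq 16$ I would conclude $|D| \leq 6$, i.e.\ $\gamma(G) \leq 6$. This is still one short of the target, so the final step is to handle $\gamma(G) = 6$ separately, or to sharpen the bound. I expect the sharpening to be the main obstacle: one needs to push $|V(G)| \geq 2\gamma(G) + c$ with $c \geq 5$, or equivalently to rule out $\gamma(G) = 6$ on $16$ vertices. For this I would examine the $\gamma(G)=6$ case using the same machinery, showing that parts~(1)--(3) of Lemma~\ref{lem:biglemma} force $|V(G)| \geq 3|D| - a$ with $a$ small enough (since $a \leq |D| - 4 = 2$) to give $|V(G)| \geq 18 - 2 = 16$, and then arguing that equality is impossible because it would require every non-isolated vertex of $D$ to have \emph{exactly} two private neighbors and $G[D]$ to be extremal, a configuration that can be shown to admit the independent set $S$ of Lemma~\ref{lem:inddom} after all. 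Thus in every case either Lemma~\ref{lem:inddom} applies directly, or the vertex count forces $\gamma(G) \leq 5$ and Theorem~\ref{thm:dom5} finishes the proof.
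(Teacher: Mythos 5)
Your overall strategy is the same as the paper's: take an optimal dominating set $D$, observe that either Lemma~\ref{lem:inddom} applies directly or all of (1)--(3) of Lemma~\ref{lem:biglemma} hold, and in the latter case use the vertex count to force $\gamma(G)\leq 5$ so that Theorem~\ref{thm:dom5} finishes. But as written your argument has a genuine gap: in the case $\gamma(G)=6$ your counting only gives $|V(G)| \geq 16$, and you dispose of the equality case $|V(G)|=16$ by asserting that such an extremal configuration ``can be shown to admit the independent set $S$ of Lemma~\ref{lem:inddom} after all.'' That assertion is unproven, and it is precisely the remaining content of the corollary in your outline; nothing in the paper (or in your sketch) supplies it.

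The gap is self-inflicted, because you under-use part (3) of Lemma~\ref{lem:biglemma}. You bound $a \leq \alpha(D)-1 \leq |D|-4$, which follows from part (1) alone, even though you cite ``parts (1) and (3)''; part (3) says $|D| \geq a+5$, i.e.\ $a \leq |D|-5$, which is strictly stronger. Feeding that into part (2) gives $|V(G)| \geq 3|D|-a \geq 2|D|+5$, so $|V(G)|\leq 16$ forces $|D| \leq 5$ outright --- the $\gamma(G)=6$ case and its equality analysis vanish. This is exactly how the paper argues (in contrapositive form): assuming $\gamma^{-1}(G)>\alpha(G)$, it gets $\gamma(G)\geq 6$ from Theorem~\ref{thm:gamma5} and Lemma~\ref{lem:leq}, then splits on $a$: if $a=0$, part (2) gives $|V(G)| \geq 3|D| \geq 18$; if $a\geq 1$, parts (2) and (3) give $|V(G)| \geq 3(a+5)-a = 2a+15 \geq 17$, contradicting $|V(G)|\leq 16$ with no boundary case. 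One further correction: your opening claim that every isolate-free graph on at most $16$ vertices satisfies $\gamma(G)\leq 5$ is false as a standalone statement ($8K_2$ has $16$ vertices and $\gamma=8$); the bound is only available conditionally, after the alternative of Lemma~\ref{lem:biglemma} has been invoked, so that framing sentence should be dropped even though your actual case structure is the right one.
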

\begin{proof}
  Let $G$ be some graph with $\gamma^{-1}(G) > \alpha(G)$, and let $D$
  be an optimal dominating set in $G$.  Let $a$ be the number of
  isolated vertices in $G[D]$.  By Lemma~\ref{lem:inddom}, there cannot be
  any independent set $S$ such that $S-D$ dominates $D-S$, so by
  Lemma~\ref{lem:biglemma}, we have:
  \begin{enumerate}
  \item[(2)] $\sizeof{V(G)} + a \geq 3\sizeof{D}$, and
  \item[(3)] $\sizeof{D}\geq a+5$.
  \end{enumerate}
  By Theorem~\ref{thm:gamma5} and Lemma~\ref{lem:leq}, we have
  $\gamma(G) \geq 6$, so that $\sizeof{D} \geq 6$. If $a=0$
  then (2) yields $\sizeof{V(G)} \geq 18$. Otherwise,
  $a \geq 1$, and then (2) combined with (3) yields
  $\sizeof{V(G)} \geq 2a+15 \geq 17$.
\end{proof}
\bibliographystyle{amsplain} \bibliography{inverse}

\end{document}